\newtheorem{theorem}{Theorem}
\newtheorem{lemma}[theorem]{Lemma}
\newcommand{\N}{\mathbb{N}} 
\newcommand{\R}{\mathbb{R}} 
\renewcommand{\qed}{\hfill$\square$} 
\newcommand{\mC}{\mathcal{C}}
\newcommand{\mH}{\mathcal{H}}
\newcommand{\mW}{\mathcal{W}}
\newcommand{\mM}{\mathcal{M}}
\newcommand{\mE}{\mathcal{E}}
\newcommand{\mP}{\mathcal{P}}
\newcommand{\mT}{\mathcal{T}}
\newcommand{\eps}{\varepsilon}
\newcommand{\prob}{\mathbb{P}}
\tikzset{vtx/.style={inner sep=1.7pt, outer sep=0pt, circle, fill,draw}}
\title{New bounds on the generalized Ramsey number $f(n,5,8)$} 
\author{Enrique Gomez-Leos\thanks{Department of Mathematics, Iowa State University, Ames IA.\newline \indent\,\,\,\,Email: \texttt{\{enriqueg, eheath, abparker, cschwi, zerbib\}@iastate.edu}} \and 
Emily Heath\footnotemark[1] \and 
Alex Parker\footnotemark[1] \and
Coy Schwieder\footnotemark[1] \and
Shira Zerbib\footnotemark[1]
}
\date{\today}
\begin{document}

\maketitle

\begin{abstract}
Let $f(n,p,q)$ denote the minimum number of colors needed to color the edges of $K_n$ so that every copy of $K_p$ receives at least $q$ distinct colors. In this note, we show  $\frac{6}{7}(n-1)\le f(n,5,8) \le n+o(n)$. The upper bound is proven using the ``conflict-free hypergraph matchings method" which was recently used by Mubayi and Joos to prove $f(n,4,5)=\frac{5}{6}n+o(n)$.  
\end{abstract}

\section{Introduction}

Given graphs $H$ and $G$ and positive integer $q$, an \emph{$(H,q)$-coloring} of $G$ is an edge-coloring in which each copy of $H$ receives at least $q$ colors. We denote by $f(G,H,q)$ the minimum number of colors required for an $(H,q)$-coloring of $G$. When $G=K_n$ and $H=K_p$, we use the notation $f(n,p,q)$. Note that determining $f(n,p,2)$ for all $n,p$ is equivalent to determining the classical multicolor Ramsey numbers. 
Introduced by Erd\H{o}s and Shelah~\cite{erdos1975,erdos1981}, these numbers were further explored by Erd\H{o}s and Gy\'arf\'as~\cite{EG} in the case where $G$ and $H$ are complete graphs and by Axenovich, F\"uredi, and Mubayi~\cite{AFM} in the case where $G$ and $H$ are complete bipartite graphs. Since then, the problem has been studied by many researchers, including~\cite{axenovich2000,BEHK,BCDP,56,CH1,CH2,CFLS,FPS,Mubayi1,mubayi2004,PS,sarkozy2000edge,sarkozy2003application}.

Among other results, Erd\H{o}s and Gy\'arf\'as used a probabilistic argument to give a general upper bound on $f(n,p,q)$, showing
\[f(n,p,q)=O\left(n^{\frac{p-2}{\binom{p}{2}-q+1}}\right).\]
Using a randomized coloring process and the differential equation method, Bennett, Dudek, and English~\cite{BDE} later improved this bound by a logarithmic factor for values of $q$ and $p$ with $q\leq (p^2-26p+55)/4$. Recently, Bennett, Delcourt, Li, and Postle~\cite{BDLP} extended this result to all values of $p$ and $q$ except at the values $q=\binom{p}{2}-p+2$ and $q=\binom{p}{2}-\left\lfloor\frac{p}{2}\right\rfloor+2$, where the local lemma bound of Erd\H{o}s and Gy\'arf\'as is known to be tight. They showed that for an $n$-vertex graph $G$, 
\[f(G,H,p)=O\left(\left(\frac{n^{|V(H)|-2}}{\log n}\right)^{\frac{1}{|E(H)|-q+1}}\right).\]
Moreover, they generalized this result to give an analogous upper bound for list colorings and for hypergraphs of higher uniformity. 

The proof in~\cite{BDLP} uses a new method introduced independently by Delcourt and Postle~\cite{DP} and by Glock, Joos, Kim, K\"uhn, and Lichev~\cite{GJKKL} for finding ``forbidden submatchings" or ``conflict-free hypergraph matchings."  
This method has been applied to a wide variety of problems; see for example~\cite{BHZ,DP, DP2, GJKKL, GJKKLP}.
In particular, Joos and Mubayi~\cite{JM} recently used this method to show that \[f(n,4,5)=\frac{5}{6}n+o(n).\] This result had previously been obtained by Bennett, Cushman, Dudek, and Pra\l at~\cite{BCDP} using a randomized coloring process involving the differential equation method, and  
answers a question of Erd\H{o}s and Gy\'arf\'as~\cite{EG}. 
They also used similar methods to prove new upper bounds showing that $f(K_{n,n},C_4,3)=\frac{2}{3}n+o(n)$ and $f(K_n,C_4,3)=\frac{1}{2}n+o(n)$. In this paper, we adapt their technique to improve the upper bound of $f(n,5,8)=O(n)$ given by Erd\H{o}s and Gy\'arf\'as~\cite{EG}.

\begin{theorem}\label{thm:upperbound}
    We have $f(n,5,8)\leq n+o(n)$.
\end{theorem}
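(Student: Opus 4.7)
We plan to adapt the conflict-free hypergraph matching framework of~\cite{DP, GJKKL}, following the approach of Joos and Mubayi~\cite{JM} for $f(n,4,5)$. Take $N = n$ colors, with the goal of producing a coloring in which every color class is a near-perfect matching; since $\binom{n}{2} \approx N \cdot (n-1)/2$, this saturates the color budget. We define a hypergraph $\mH$ on vertex set $E(K_n) \sqcup [N]$ whose hyperedges are the sets $\{c\} \cup M$ for each color $c \in [N]$ and each (near-)perfect matching $M$ of $K_n$, so that a matching in $\mH$ corresponds to a partial proper edge-coloring with matching color classes.

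Because every color class is a matching, each color contributes at most two edges to any fixed $K_5$, and hence such a $K_5$ receives fewer than $8$ colors exactly when at least three of its colors are each doubled. We let the conflict hypergraph $\mC$ consist of all such $3$-element ``three-doubled-color'' configurations---triples of hyperedges $\{c_i\} \cup M_i$ with distinct colors for which some $5$-set $S \subset V(K_n)$ has $|M_i \cap E(K_5[S])| = 2$ for each $i$ and the six underlying edges distinct---together with auxiliary conflicts on the analogous partial patterns on $8$- or $9$-edge subsets of $K_5[S]$, which will be needed so that the cleanup step below preserves the $8$-color property. We then verify the hypotheses of the conflict-free matching theorem: $\mH$ is near-regular (the degrees $N \cdot (n-3)!!$ and $(n-1)!!$ agree up to a factor of $1+o(1)$), codegrees are smaller by at least a factor of $1/n$, and the conflict degree through a fixed hyperedge is of order $D^2/n$, coming from roughly $n^3$ choices of $5$-set in which $M$ has exactly two edges, $O(1)$ ways to complete a bad local pattern, $N^2$ choices of two additional colors, and the compatible extending matchings for the other two. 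The conflict-free matching theorem then furnishes a matching $\mM$ in $\mH$ covering all but $o(n^2)$ edges of $K_n$ and containing no conflict of $\mC$.

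Finally, the uncovered residual subgraph has maximum degree $o(n)$, so Vizing's theorem extends $\mM$ to a complete edge-coloring using $o(n)$ fresh colors; these can only add to the color count on any $K_5$, and the partial-$K_5$ conflicts built into $\mC$ ensure that the portion of each $K_5$ already covered by $\mM$ contains enough colors for the extension to maintain the $8$-color condition. The total palette size is thus $n + o(n)$, as required. The main obstacle is the conflict-degree and conflict-codegree calculation: after multiplying the combinatorial factors for the choices of $5$-set, local bad pattern, extra colors, and extending matchings, we must check that the product is smaller than $D^2$ (respectively $D$) by the polynomial factor in $n$ required by the theorem, and analogous bounds must be verified for the auxiliary partial-$K_5$ conflicts. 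A secondary subtlety is choosing these partial-$K_5$ conflicts judiciously so that the cleanup preserves the $8$-color property without enlarging $\mC$ past the theorem's allowance.
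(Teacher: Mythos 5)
Your high-level strategy (conflict-free hypergraph matchings in the style of Joos--Mubayi, followed by a cleanup of the leftover edges) is the right family of ideas, but the specific setup has two genuine gaps. First, Theorem~\ref{thm:blackbox} applies to $k$-uniform hypergraphs with $k$ a \emph{constant}, whereas your hyperedges $\{c\}\cup M$ have size $\Theta(n)$; the theorem simply does not apply to this $\mH$. Moreover, even granting some version for large uniformity, your conflict-degree estimate is off by a factor of $n^2$: writing $D\approx(n-1)!!$ for the degree of $\mH$, the number of hyperedges through two fixed disjoint edges of $K_n$ is $N(n-5)!!\approx D/n$, so the number of ``three doubled colors'' conflicts through a fixed hyperedge is $\Theta(n^3)\cdot O(1)\cdot\Theta\bigl((D/n)^2\bigr)=\Theta(nD^2)$, which violates condition (C2) (you claim $D^2/n$, but your own list of factors $n^3\cdot N^2\cdot((n-5)!!)^2$ already multiplies out to $nD^2$). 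The standard repair, and what the paper does, is to take constant-size hyperedges encoding small local pieces of a color class rather than whole color classes: here $\mH$ is $9$-uniform, each hyperedge encodes a triangle together with two colors (so color classes are disjoint unions of edges and $2$-edge paths, not matchings), $d=\Theta(n^3)$, and the conflicts are the explicit $4$-to-$6$-edge bad subgraphs classified in Lemma~\ref{lem:badK5}. If you insist on a proper coloring you would similarly need to atomize it (e.g.\ $3$-uniform hyperedges $\{uv,u_c,v_c\}$ with vertex--color copies) and redo the conflict bookkeeping from scratch.

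Second, the cleanup via Vizing's theorem does not preserve the $8$-color property. Vizing controls only the number of new colors, not which disjoint pairs of leftover edges receive equal colors, so a $K_5$ that already carries two doubled colors from the main coloring and contains two disjoint uncovered edges can still end up with three doubled colors. Your proposed fix---``auxiliary conflicts on partial patterns''---cannot work as stated, because conflicts are sets of hyperedges of $\mH$ and therefore cannot refer to the set of edges that happen to remain \emph{uncovered} by the matching, which is not known in advance. The paper resolves this with two extra ingredients you are missing: trackable test functions that force property~(\ref{property5}) of Theorem~\ref{thm:coloringproperties} (for each leftover edge $xy$ and each bad type, only $n^{1-\delta}$ other leftover edges complete a bad configuration with it), and then a \emph{random} coloring of the leftover graph with $n^{1-\delta}$ fresh colors whose correctness is certified by the Lov\'asz Local Lemma. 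Some deterministic or random mechanism of this kind is unavoidable; without it the final step of your argument fails.
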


To prove this theorem, we construct a coloring of $K_n$ in two stages. In the first stage, we define hypergraphs $\mH$ and $\mC$ for which a $\mC$-free matching in $\mH$ corresponds to a partial edge-coloring of $K_n$ with $n$ colors in which each color class consists of vertex-disjoint copies of edges and paths of length 2. In the second stage, we randomly color the remaining edges with a set of $o(n)$ colors and show that each copy of $K_5$ in the resulting coloring of $K_n$ receives at least 8 distinct colors.

The new ingredient in our proof, which makes it different than the proof in \cite{JM}, is that for every vertex $x\in V(K_n)$ and color $i$, we create two copies $x_i$ and $x_i'$ of $x$, and only one of them will be included in the vertices of our hypergraph $\mH$, where $x_i$ is included with probability $p$ and  $x_i'$ with probability $1-p$ for some fixed constant $p$. If $x_i'$ is included in $V(\mH)$, then $x$ will not be  allowed to have an incident edge colored $i$, and therefore $x$ will be an isolated vertex in the color class $i$.   

In addition, we obtain an improved lower bound on $f(n,5,8)$. Previously the best known bound was $f(n,5,8)\geq \frac{1}{3}(n-1)$ given by Erd\H{o}s and Gy\'arf\'as~\cite{EG}.

\begin{theorem}\label{thm:lowerbound}
    We have $f(n,5,8)\geq\frac{6}{7}(n-1)$.
\end{theorem}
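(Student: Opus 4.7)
The proof will proceed by a local analysis at a single vertex. Fix an arbitrary vertex $v$ of $K_n$ in any edge-coloring satisfying the $(K_5,8)$-condition, and for each color $i$ let $d_i(v)$ be the number of edges at $v$ of color $i$.

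\textbf{Step 1 (Degree bound at $v$).} First I would show that $d_i(v) \leq 3$ for every color $i$. If instead four edges at $v$ had color $i$ with neighbors $u_1,u_2,u_3,u_4$, then the $K_5$ on $\{v,u_1,u_2,u_3,u_4\}$ would contain four edges of color $i$ and only six further edges, yielding at most $1+6=7$ colors and contradicting the $(K_5,8)$-hypothesis.

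\textbf{Step 2 (Structural constraints).} Let $a,b,c$ denote the number of colors at $v$ with multiplicity $1,2,3$ respectively, so $a+2b+3c = n-1$ and $N_v := \#\{\text{colors at } v\} = a+b+c$. For each \emph{triple} (color $i$ with $d_i(v)=3$ and leaves $u_1,u_2,u_3$), inspecting the $K_5$'s of the form $\{v,u_1,u_2,u_3,w\}$ for every $w$ shows that the three non-$vu_j$ edges of the form $u_ju_k$ together with the four $w$-edges must bear seven distinct colors, all different from $i$. In particular the triangle $u_1u_2u_3$ is rainbow, and since this rainbow conclusion must hold for every $w$, none of the three triangle colors can appear on any edge incident to $v$. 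For each \emph{pair} (color $i$ with $d_i(v)=2$ and leaves $u_1,u_2$), if additionally $u_1u_2$ has color $i$ (a monochromatic triangle at $v$), a similar analysis forces $G_i$ to consist of exactly this triangle; otherwise the edge $u_1u_2$ carries a color $j \neq i$ that contributes to the color count in the nearby $K_5$'s.

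\textbf{Step 3 (Combining).} Finally I would combine these local constraints through a counting/discharging argument over all $K_5$'s through $v$. Writing $C$ for the total number of colors, the idea is that
\[
C \geq N_v + (\text{colors forced to be absent from } v)
  \geq \tfrac{6}{7}(n-1),
\]
where the ``absent'' colors come from the rainbow triangles provided by the triples together with the triangle color-classes forced by monochromatic-triangle pairs. The algebraic target is essentially the inequality $a \geq 5b + 11c$ (after accounting for the contributions of extra forced colors), which is equivalent to $N_v + (\text{forced extras}) \geq \frac{6}{7}(n-1)$.

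\textbf{Main obstacle.} The delicate point is quantifying the ``colors absent at $v$'' contributed by the triples. Each triple at $v$ forces three distinct colors that cannot appear at $v$, but the corresponding rainbow triangles (which are vertex-disjoint in the link of $v$) may share colors across different triples — in the worst case all triples could use the same three rainbow-triangle colors. The proof must therefore account globally, not merely locally, for these forced new colors: when the overlap is large the color class of a shared triangle color becomes a large matching, which in turn interacts with other $K_5$-constraints (including those not passing through $v$) to force further new colors elsewhere. A careful weighting of triples versus pairs that trades off these two regimes is what yields the precise constant $6/7$.
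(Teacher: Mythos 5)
Your proposal has a genuine gap, and you have in fact located it yourself in the ``Main obstacle'' paragraph: the argument never actually establishes the inequality it needs. Steps 1 and 2 are correct local observations (the degree bound $d_i(v)\le 3$, and the fact that for a color $i$ appearing three times at $v$ the three ``link'' edges $u_ju_k$ carry three distinct colors that are forbidden at $v$ — this is exactly the paper's observation about the non-edges $L(X)$ of a component $X\cong K_{1,3}$). But Step 3 is only a statement of intent. The target $a\ge 5b+11c$ is precisely what would be needed if one counted only the colors present at $v$, and it is not true in general; the deficit must be made up by the forced-absent colors, and you correctly observe that these may coincide across different triples at $v$, so a purely local count at one vertex cannot close the argument. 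No mechanism is given for the promised ``careful weighting,'' so the constant $6/7$ is never derived.

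For comparison, the paper resolves exactly this difficulty by going global in two ways. First, it classifies all maximal monochromatic components ($K_2$, $P_3$, $P_4$, $K_{1,3}$, $K_3$) and proves a disjointness lemma: the $K_2$-components forced into the ``non-edges'' of distinct $P_3$-, $P_4$-, and $K_{1,3}$-components are themselves distinct, which yields the global inequality $|A|\ge |B_1|+\tfrac12|B_2|+2|C|+3|D|$. Second, it double-counts incidences $(v,i)$ of vertices with colors over \emph{all} vertices $v$ simultaneously, showing that each color forced to be absent at $v$ is counted exactly once; summing over $v$ converts the local ``absent color'' information into the inequality $mn-2\binom n2\ge |D|-|B_1|$, with a companion inequality from an averaging argument. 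The constant $6/7$ then falls out of a small linear program combining these constraints with the edge count $\binom n2=|A|+2|B|+3|C|+3|D|$. If you want to salvage your approach, you would need to prove analogues of both the disjointness lemma and the summed (rather than single-vertex) incidence count; as written, the proposal does not prove the theorem.
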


In Section~\ref{sec:LB}, we prove this lower bound.  
In Section~\ref{sec:blackbox}, we introduce the main tool for our upper bound, namely, the conflict-free hypergraph matching method. Finally, in Section~\ref{sec:UB}, we use this technique to improve the upper bound on $f(n,5,8)$.

\section{Lower bound}\label{sec:LB}

In this section we prove Theorem \ref{thm:lowerbound}. Consider an arbitrary edge-coloring of $K_n$ using $m$ colors in which every copy of $K_5$ receives at least 8 colors. Note that each maximal component in any color class can have at most 3 edges, so it must be one of  $K_2, P_3, P_4, K_{1,3},$ or $K_3$. Define
    \begin{align*}
        A &= \text{the set of maximal monochromatic components isomorphic to } K_2, \\
        B &= \text{the set of maximal monochromatic components isomorphic to }  P_3, \\
        C &= \text{the set of maximal monochromatic components isomorphic to } P_4, \\
        D &= \text{the set of maximal monochromatic components isomorphic to } K_{1,3}, \\
        E &= \text{the set of maximal monochromatic components isomorphic to } K_3. 
    \end{align*}

Next, we partition $B$ into two sets as follows:
    \begin{align*}
        B_1 &= \{X \in B : |V(X) \cap V(Y)| \leq 1 \text{ for all } Y \in B - \{X\}\}\\
        B_2 &= \{X \in B : |V(X) \cap V(Y)| \geq 2 \text{ for some } Y \in B - \{X\}\}
    \end{align*}
By  definition, we have $B_2 = B - B_1$. \\

Observe that if there is a copy $X$ of $K_3$ in color $i$, then there can be no other edges of color $i$, and moreover, the edges between $V(X)$ and $V(K_n)-V(X)$ are all of distinct colors. 
Therefore, if there is a monochromatic copy of $K_3$, then there are at least $3(n-3)+1$ colors. 

So, we may assume there is no monochromatic triangle in this coloring. Thus, we have 
\begin{equation}\label{eq:LB1}
\binom{n}{2} = |A| + 2|B| + 3|C| + 3|D|.
\end{equation}

\begin{lemma} We have
    \begin{equation}\label{eq:LB2}
    |A| \geq |B_1| + \frac{1}{2} |B_2| + 2|C| + 3|D|.
    \end{equation}
\end{lemma}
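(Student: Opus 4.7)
The plan is to charge each component $X\in B_1\cup B_2\cup C\cup D$ to certain ``chord edges'' of $X$---edges of $K_{|V(X)|}$ on $V(X)$ not in $X$---and argue that these chords lie in $A$ and receive only bounded total charge. The sole tool needed is the bound $k-c\leq 2$: if some $K_5$ has $k$ of its edges using only $c$ distinct colors, the remaining $10-k$ edges can contribute at most $10-k$ new colors, so the requirement $c+(10-k)\geq 8$ forces $k-c\leq 2$.

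For each $X\in C\cup D$ (three edges of color $i$ on four vertices), adjoining any $a\notin V(X)$ gives a $K_5$ with $(k,c)=(3,1)$, forcing the other seven edges to bear seven distinct colors, all different from $i$. This shows every chord edge of $X$ is alone in its color class (i.e., lies in $A$): an additional same-color edge either touches $V(X)\cup\{a\}$ (breaking the distinctness) or lies outside, in which case its color class contains a $P_3$, $P_4$, $K_{1,3}$, or $K_3$, all of which overload a $K_5$ built from $V(X)$ together with the relevant extra vertices. Similar two-component $K_5$ arguments show no two components of $C\cup D$ share a chord, and no chord of a $C\cup D$ component is also a chord of a $P_3\in B$. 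Hence $|A|\geq 3|C|+3|D|$ from pairwise distinct edges, slightly stronger than the $2|C|+3|D|$ we need.

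For $X\in B$ (a $P_3$ with leaves $u,w$ and chord $e_X=uw$), the $K_5$-count with $(k,c)=(2,1)$ is weaker, but case analysis still shows $e_X\in A$ or $e_X$ is an edge of another $P_3$ $X'$ having $u$ or $w$ as its middle; in that case $V(X')$ shares $\{u,w\}$ with $V(X)$, forcing $X,X'\in B_2$. For $X\in B_1$ the second possibility is ruled out, so $e_X\in A$ uniquely, contributing $|B_1|$ distinct chords. For $X\in B_2$, call $X$ \emph{type $\alpha$} if $e_X\in A$ and \emph{type $\beta$} otherwise, and in the latter case write $X\to X'$ for the partner. A chain $X_1\to X_2\to X_3$ would force six edges in three colors inside a $K_5$ on the five relevant vertices, violating $k-c\leq 2$; so every type-$\beta$ $X$ points to a type-$\alpha$ partner $X'$, and I charge $X$ to $e_{X'}\in A$.

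The main obstacle is the multiplicity bound: each $e=pq\in A$ receives at most two $B_2$-charges. The candidate chargers on $e$ are the type-$\alpha$'s with chord $e$ (parameterized by middle vertex) together with the type-$\beta$ children of each such parent (one per edge of the parent $P_3$). A $K_5$ on $\{p,q\}$ plus three candidate middles violates $k-c\leq 2$, so at most two type-$\alpha$'s share chord $e$; another $K_5$ argument rules out any child coexisting with two type-$\alpha$'s on $e$, and a $K_5$ on $\{p,q,v,y_1,y_2\}$ rules out two children of a single type-$\alpha$ with middle $v$. Combining with the disjointness claims above yields $|A|\geq |B_1|+\frac{1}{2}|B_2|+3|C|+3|D|$, which implies the lemma.
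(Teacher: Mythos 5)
Your argument is essentially the paper's: charge each component to non-edges ("chords") on its vertex set, show those chords are maximal $K_2$ components, and control multiplicities via the observation that a $K_5$ cannot contain $k$ edges in $c$ colors with $k-c\geq 3$. Your handling of $B_2$ (type $\alpha$/$\beta$ with a parent--child charging of multiplicity at most $2$) is a reorganization of the paper's pairing of $B_2$-components, and the multiplicity checks you list are the right ones and go through.

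There is one unjustified -- and in fact false -- sub-claim: that ``no two components of $C\cup D$ share a chord,'' which you use to get the coefficient $3|C|$. For a path $X=x_1x_2x_3x_4\in C$, the endpoint chord $x_1x_4$ can also be the endpoint chord of a second path $Y=x_1y_2y_3x_4\in C$ in another color: the union spans six vertices, and every $K_5$ inside it contains at most four edges from $X\cup Y$ in two colors, so no bad $K_5$ is forced. (Your two-component $K_5$ argument does correctly kill the two \emph{inner} chords of a $P_4$, all three chords of a $K_{1,3}$, and any collision between a $C\cup D$ chord and a $B$-chord, since in those cases the relevant pieces fit in five vertices with $k-c\geq 3$.) Consequently the $3|C|$ terms in your count may double-count endpoint chords. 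The fix is exactly what the paper does: discard the endpoint chord of each $P_4$ and charge only its two inner chords, giving $2|C|$, which is all the lemma requires. A similar small imprecision: a chord being ``in $A$'' means it is a \emph{maximal} $K_2$ component (no same-colored edge adjacent to it), not that it is the unique edge of its color; a disjoint second edge of the same color creates no conflict, but also causes no harm to your count. With these two repairs your proof is complete.
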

\begin{proof}
For a graph $H$, let $L(H)$ be the set of vertex pairs of $H$ not forming an edge in $H$. 
Note that for any $X \in C \cup D$, $L(X)$ must contain three elements from $A$ (of three different colors), since otherwise we get a copy of $K_5$ with less than 8 colors.
For the same reason, for any $X \in D$ and $Y \in B \cup C \cup D$, then $X, Y$ share at most one vertex, and therefore $L(X)\cap L(Y)=\emptyset$.

Suppose $X = x_1x_2x_3x_4$ is an element of $C$.
We claim that for any $Y \in B \cup C \cup D$, $(L(X) - x_1x_4) \cap L(Y)=\emptyset$. Indeed, if not, then one of the paths $x_1x_2x_3$ or $x_2x_3x_4$ shares two vertices with $Y$, which results in a $K_5$ seeing less than 8 colors. 

Observe that for any $X \in B_1$, $L(X)$ must contain an element from $A$, since otherwise we get a $K_5$ with less than 8 colors. Note also that for any $X, Y \in B_2$, $L(X \cup Y)$ must be two elements from $A$. 

Now let $X, Y \in B_2$ share two vertices. (Hence $X\cup Y$ contains four vertices.) Then there is no $Z \in B_2$ such that $Z$ shares two vertices with either $X$ or $Y$, since otherwise we get a $K_5$ with less than 8 colors. That is, for any $X \in B_2$, there is a unique $Y \in B_2$ such that $X$ and $Y$ share two vertices. Therefore, the components in $B_2$ come in pairs $(X_1, Y_1), \dots, (X_{|B_2|/2}, Y_{|B_2|/2})$, where $|V(X_i) \cap V(Y_i)| = 2$ for all $1 \leq i \leq |B_2|/2$. Further, for $i \neq j$, $|V(Z_i) \cap V(Z_j)| \leq 1$, where $Z_i \in \{X_i, Y_i\}, Z_j \in \{X_j, Y_j\}$. 

Next, observe that for any pair $(X_i, Y_i)$, one of the maximal components isomorphic to $K_2$ must be in $L(X_i)$ or $L(Y_i)$. Denote this edge by $e_i$. We claim that $e_i \not \in L(X_j \cup Y_j)$ for all $j \neq i$. Suppose there exists some $1 \leq i, j \leq |B_2|/2$ such that $e_i \in L(X_j \cup Y_j)$. Then, since $|V(X_j \cup Y_j)| = 4$ and $e_i \in L(X_j \cup Y_j)$, $X_i$ shares two vertices with $X_j \cup Y_j$. Therefore, $|V(X_i \cup X_j \cup Y_j)| = 5$ and there are three maximal monochromatic components isomorphic to $P_3$ in the induced graph on these five vertices, creating a copy of $K_5$ with less than 8 colors, a contradiction.

We conclude the following: 
\begin{itemize}
    \item we can associate 3 edges in $A$ to every $X \in D$, namely the edges in $L(X)$, 
    \item we can associate 2 edges in $A$ to every $X=x_1x_2x_3x_4 \in C$, namely the edges in $L(X)-\{x_1x_4\}$, 
    \item we can associate 1 edge in $A$ to every $X \in B_1$, namely the edge in $L(X)$,
    \item we can associate 1 edge to every pair $(X_i, Y_i) \in B_2$, namely the edge $e_i$.
\end{itemize}

Thus, we get
$$|A| \geq |B_1| + \frac{1}{2}|B_2| + 2|C| + 3|D|, $$ proving the lemma.
\end{proof}

\begin{lemma} We have
    \begin{equation}\label{eq:LB3}
    mn - 2 \binom{n}{2} \geq |D| - |B_1|.
    \end{equation}
\end{lemma}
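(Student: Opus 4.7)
The plan is to translate the inequality into a lower bound on the number of isolated vertex-color pairs. Call a pair $(v,i)$ \emph{isolated} if $v$ is incident to no edge of color $i$, and let $I_i$ denote the number of vertices isolated in color $i$. A local count at each vertex gives that the number of distinct colors at $v$ equals $(n-1) - d_2(v) - 2 d_3(v)$, where $d_k(v)$ is the number of colors in which $v$ has exactly $k$ incident edges; summing over $v$ and using the no-triangle assumption (so $\sum_v d_2(v) = |B| + 2|C|$ from middle vertices of $P_3$'s and $P_4$'s, and $\sum_v d_3(v) = |D|$ from $K_{1,3}$-centers) yields
\[
mn - 2\binom{n}{2} = \sum_i I_i - |B| - 2|C| - 2|D|.
\]
Thus the desired bound is equivalent to $\sum_i I_i \geq 3|D| + 2|C| + |B_2|$.

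To prove this, I will exhibit $3|D| + 2|C| + |B_2|$ distinct isolated vertex-color pairs as follows. For each $X \in D$ with center $v_X$, the three edges in $L(X)$ lie in $A$ (by the previous lemma), so they have three distinct colors, each of whose color class is a single $K_2$ on two leaves of $X$ and hence has $v_X$ isolated; take these three pairs. For each $X = x_1x_2x_3x_4 \in C$, the edges $x_1 x_3$ and $x_2 x_4$ in $L(X) - \{x_1 x_4\}$ lie in $A$; take $(x_2, \text{color of } x_1x_3)$ and $(x_3, \text{color of } x_2x_4)$. For each pair $(X_j, Y_j)$ in $B_2$, let $e_j \in A \cap (L(X_j) \cup L(Y_j))$ have color $c_j$, and take $(w_{X_j}, c_j)$ and $(w_{Y_j}, c_j)$ for some distinct $w_{X_j} \in V(X_j) \setminus V(e_j)$ and $w_{Y_j} \in V(Y_j) \setminus V(e_j)$.

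Distinctness across different components reduces to the selected edges being distinct, which is precisely the disjointness content of the previous lemma: $L(X) \cap L(Y) = \emptyset$ for $X \in D$ and $Y \in B \cup C \cup D$; $(L(X) - \{x_1x_4\}) \cap L(Y) = \emptyset$ for $X \in C$ and $Y \in B \cup C \cup D$; and $e_i \notin L(X_j \cup Y_j)$ for $i \neq j$. Since every selected edge lies in $A$, distinct edges correspond to distinct colors, and thus to distinct vertex-color pairs. The main technical point is the within-pair distinctness in $B_2$: since $V(X_j \cup Y_j) \setminus V(e_j)$ has exactly $2$ elements, the sets $V(X_j) \setminus V(e_j)$ and $V(Y_j) \setminus V(e_j)$ are nonempty subsets of a common $2$-element set; if they coincided in the same singleton, then $V(X_j) = V(Y_j)$, and combining with $e_j \in A \cap (L(X_j) \cup L(Y_j))$ one quickly derives a contradiction (either $e_j$ would be an edge of one of the $P_3$'s whose color class has two edges, contradicting $e_j \in A$, or we would be forced to $X_j = Y_j$). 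Distinct representatives therefore always exist, which completes the proof of $\sum_i I_i \geq 3|D| + 2|C| + |B_2|$ and hence the lemma.
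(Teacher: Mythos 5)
Your reformulation is sound and is in fact the paper's own argument in dual form: the paper upper-bounds the number of incident vertex--color pairs by $\sum_v(m - 3|D_v^3| - |C_v^2|) - |B_2|$, which is precisely your lower bound $\sum_i I_i \geq 3|D| + 2|C| + |B_2|$ on isolated pairs, and your derivation of the equivalence via $\sum_v d_2(v) = |B| + 2|C|$ and $\sum_v d_3(v) = |D|$ is correct, as is your choice of which isolated pairs to exhibit. However, there are two genuine gaps. First, you never prove that the selected pairs are in fact isolated. Membership of an edge in $A$ only says that the \emph{component} of its color class containing that edge is a $K_2$; it does not say the color class is a single $K_2$, so your justification in the $D$ case (``each of whose color class is a single $K_2$ \dots and hence has $v_X$ isolated'') does not follow, and in the $C$ and $B_2$ cases no justification is offered. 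Each isolation claim requires its own bad-$K_5$ argument (e.g.\ if the center $v_X$ of $X\in D$ were incident to an edge $v_Xw$ in the color of $u_1u_2\in L(X)$, the $K_5$ on $V(X)\cup\{w\}$ would have three colors covering five edges plus three further edges, hence at most $7$ colors). These arguments are the actual content of the paper's proof.

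Second, the distinctness step rests on the false claim that distinct edges of $A$ have distinct colors: a color class may consist of several vertex-disjoint $K_2$-components, each a separate element of $A$ carrying the same color. So distinctness of the selected edges does not by itself give distinctness of the vertex--color pairs. One must instead rule out directly that two different selections yield the same vertex together with the same color --- for instance that the colors of $L(X)$ for $X\in D_v^3$ are disjoint from the color of $z_1z_3$ for $Y=z_1vz_3z_4\in C_v^2$, that two $K_{1,3}$'s centered at the same $v$ contribute disjoint color sets, and the analogous statements involving the $B_2$ pairs --- each again via a bad-$K_5$ argument. This is exactly the paper's ``each color not appearing at $v$ is counted exactly once'' step. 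Both gaps are repairable by the standard arguments, but as written the proposal asserts rather than proves its two essential claims.
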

\begin{proof}
We count the number of pairs $(v,i)$ such that a vertex $v$ is incident to an edge of color $i$, in two different ways. First, the number of such pairs is 
\begin{equation*}2|A| + 3|B| + 4|C| + 4|D| = 2|A| + 3(|B_1| + |B_2|) + 4|C| + 4|D|.\end{equation*} 
For a fixed vertex $v \in V(K_n)$, define
\begin{eqnarray*}
    D_v^3 = \{X \in D : d_X(v) = 3\} & \text{and} & C_v^2 = \{X \in C : d_X(v) = 2\}.
\end{eqnarray*}
Observe that $\sum_{v \in V(K_n)} |D_v^3| = |D|$ and $\sum_{v \in V(K_n)} |C_v^2| = 2|C|$.

Note that for any $X \in D_v^3$, the set of colors $S(X)$ appearing on the edges of $L(X)$ cannot appear on any edge incident to $v$. Also, for any $Y = y_1vy_2y_3 \in C_v^2$,  the color in $S(Y)$ appearing on the edge $y_1y_2$ does not appear on any edge incident to $v$. Moreover, for any pair $X\in D_v^3$ and  $Y \in  C_v^2$, we have $S(X) \cap S(Y) = \emptyset$ since otherwise, this would lead to a $K_5$ seeing fewer than 8 colors. Finally, for any pair $(X_i = x_1x_2x_3, Y_i = y_1y_2y_3)$ in $B_2$, the color appearing on $L(X)$  
does not appear on any edge incident to $x_2$ and the color appearing on $L(Y)$ does not appear on any edge incident to $y_2$. Furthermore, the color appearing on $L(X)$ is not a color in $S(X')$ for $X' \in D_{x_2}^3$ since this would lead to a $K_5$ seeing fewer than 8 colors. Similarly, for $X' = z_1x_2z_3z_4 \in C_{x_2}^2$, we get the same contradiction if the color appearing on $L(X)$ is the color which appears on $z_1z_3$. The same argument holds for the color on $L(Y)$. Therefore, for a fixed vertex $v$, each color not appearing at $v$ is counted exactly once in our above arguments. Otherwise, this would result in a $K_5$ seeing fewer than 8 colors. All together, this implies that the number of pairs $(v,i)$ such that a vertex $v$ is incident to an edge of color $i$ is at most  $\sum_{v \in V(K_n)}( m - 3|D_v^3| - |C_v^2|)  - |B_2|$.

Thus we obtain, 
\begin{equation*}
    \begin{split}
        2|A| + 3(|B_1| + |B_2|) + 4|C| + 4|D|
            &\leq \sum_{v \in V(K_n)} (m - 3|D_v^3| - |C_v^2|) - |B_2| \\
            &=  mn - 3|D| - 2|C| - |B_2|.
    \end{split}
\end{equation*}

By rearranging and applying (\ref{eq:LB1}), we get $mn - 2 \binom{n}{2} \geq |D| - |B_1|$ as desired.
\end{proof}

\begin{lemma} We have
    \begin{equation}\label{eq:LB4}
    mn - 2 \binom{n}{2} \geq -2|A| + 2|B_1| - 4|B_2| - 6|C| - 2|D|.
    \end{equation}
\end{lemma}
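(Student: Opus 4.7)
The plan is to prove the equivalent form $mn \geq 6|B_1| + 4|D|$, obtained by substituting $\binom{n}{2} = |A| + 2|B_1| + 2|B_2| + 3|C| + 3|D|$ from (\ref{eq:LB1}) into the stated inequality. I will do so by exhibiting at least $6|B_1| + 4|D|$ pairwise distinct elements of $V(K_n) \times [m]$.

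To each $X = x_1 x_2 x_3 \in B_1$ with middle $x_2$, color $i_X$, and non-edge color $c_X$ (the color of $x_1 x_3$, which lies in $A$ by the argument from Lemma 1), I assign the six pairs $\mathcal{P}(X) := \{(x_k, i_X), (x_k, c_X) : k \in \{1, 2, 3\}\}$; to each $Y \in D$ with color $j_Y$, I assign the four pairs $\mathcal{Q}(Y) := \{(v, j_Y) : v \in V(Y)\}$. Within $\mathcal{P}(X)$ the pairs are distinct because $i_X \neq c_X$: otherwise the $P_3$ $x_1 x_2 x_3$ and the edge $x_1 x_3$ would share a color class and produce a monochromatic $K_3$, which has already been excluded. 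The four pairs in $\mathcal{Q}(Y)$ are trivially distinct.

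The bulk of the argument will consist of verifying cross-structure distinctness. Since a coincidence $(v, c) = (v', c')$ requires the two structures to share the vertex $v = v'$ and to carry matching colors, I will enumerate the possibilities: for two $B_1$-components $X, X'$, the shared vertex can be middle-middle, middle-endpoint, or endpoint-endpoint; for a $B_1$-component $X$ and a $D$-component $Y$, the shared vertex must be the middle $x_2$ of $X$ (since if $c_X = j_Y$, the endpoints $x_1, x_3$ lie in a $K_2$ max-component of $c_X$'s class and so are disjoint from $Y$), and it is either the center or a leaf of $Y$. Two $D$-components cannot contribute any coincidence, as same-color $K_{1,3}$'s are disjoint max-components. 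In each remaining case I will exhibit a specific 5-vertex set whose induced $K_5$ carries at most seven distinct colors, contradicting the hypothesis. As a representative example, if $X, X' \in B_1$ share their middle vertex and $c_X = c_{X'}$, the $K_5$ on $\{x_2, x_1, x_3, x_1', x_3'\}$ has six of its ten edges colored using only the three colors $i_X, i_{X'}, c_X$, leaving only four edges to supply at least five new colors---impossible.

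The main obstacle will be completing the distinctness case analysis in full. Each sub-case reduces to a short $K_5$-color count, but there are several qualitatively different shared-vertex configurations to book-keep, and in each one must also check the cross-type coincidences (for instance $(x_k, i_X) = (v, c_{X'})$). Once distinctness is established, the bound $6|B_1| + 4|D| \leq mn$ is immediate, and rearranging via (\ref{eq:LB1}) yields (\ref{eq:LB4}).
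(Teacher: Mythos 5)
Your proposal is correct and is essentially the paper's argument: both prove the same intermediate inequality $mn \ge 6|B_1| + 4|D|$ by exhibiting distinct (vertex, color) incidences (the paper phrases this as a per-vertex count of distinct colors, summed over all vertices) and then combine with equation~(\ref{eq:LB1}). The only real difference is the charging scheme for $K_{1,3}$-components: the paper assigns four distinct colors (that of $Y$ together with the three $A$-colors on $L(Y)$) to the center of each $Y \in D$, whereas you assign the single color of $Y$ to each of its four vertices; both total $4|D|$, and the distinctness verifications in either scheme reduce to the same kind of $K_5$ color counts.
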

\begin{proof}
    By averaging, there exists a vertex $v$ that is adjacent to at least $3|B_1|/n$ elements from $B_1$. For each $X \in B_1$ with $X$ incident to $v$, we get two colors (one from $X$ and one from $L(X)$). Neither of these colors can be the same as two colors coming from some other $Y \in B_1$ with $Y$ incident to $v$. Also, we get four colors for each $X \in D_v^3$ (one from $X$ and three from the edges of $L(X)$). Further, none of these colors can be the same as any color coming from some other $Y \in D_v^3$ nor can they be the same as any color coming from $Y \in B_1$ with $Y$ incident to $v$. Therefore,  $m \geq 4|D_v^3| + 2\frac{3|B_1|}{n}$. Summing over all vertices, we get
    \begin{equation*} mn \geq 4|D| + 6|B_1|. \end{equation*}

    Combining with (\ref{eq:LB1}), this shows $mn - 2\binom{n}{2} \geq -2|A| + 2|B_1| - 4|B_2| - 6|C| - 2|D|$.
\end{proof}

 Now, setting $P = mn - 2\binom{n}{2}$ and combining equations~(\ref{eq:LB1}), (\ref{eq:LB2}), (\ref{eq:LB3}),  (\ref{eq:LB4}), we get the following linear program:
\begin{align*}
\text{min }   P                                                                        \\
\text{s.t.  ~} & \binom{n}{2}            =      |A| + 2(|B_1| + |B_2|) + 3(|C| + |D|)   \\
                   &  |A|                     \geq   |B_1| + \frac{1}{2}|B_2| + 2|C| + 3|D|  \\
                    & P  \geq   |D| - |B_1|                             \\
                    & P  \geq   -2|A| + 2|B_1| - 4|B_2| - 6|C| - 2|D|   \\
                   &  A, B_1, B_2, C, D, P   \in     \mathbb{Z}                              \\
                   &  A, B_1, B_2, C, D      \geq    0
\end{align*}
Solving this linear program, we obtain $mn - 2\binom{n}{2} =P \geq -\frac{2}{7}\binom{n}{2}$, and  thus we have  $m \geq \frac{6}{7}(n - 1),$ as needed.  

\qed

\section{Conflict-free hypergraph matching method}\label{sec:blackbox}

In order to prove our upper bound on $f(n,5,8)$, we will use the version stated in~\cite{JM} of the conflict-free hypergraph matching theorem from~\cite{GJKKL}. 

Given a hypergraph $\mH$ and a vertex $v\in V(\mH)$, its \emph{degree} $\deg_{\mH}(v)$ is the number of edges in $\mH$ containing $v$. The maximum degree and minimum degree of $\mH$ are denoted by $\Delta(\mH)$ and $\delta(\mH)$, respectively. For $j\geq 2$, $\Delta_j(\mH)$ denotes the maximum number of edges in $\mH$ which contain a particular set of $j$ vertices, over all such sets. 

In addition, for a (not necessarily uniform) hypergraph $\mC$ and an integer $k$, let $\mC^{(k)}$ be the set of edges in $\mC$ of size $k$. For a vertex $u\in V(\mC)$, let $\mC_u$ denote the hypergraph $\{C\backslash \{u\} \mid C\in E(\mC), u\in C\}$.

Given a hypergraph $\mH$, a hypergraph $\mC$ is a \emph{conflict system} for $\mH$ if $V(\mC)=E(\mH)$. A set of edges $E \subset \mH$ is \emph{$\mC$-free} if $E$ contains no subset $C\in \mC$. Given integers $d\geq 1$, $\ell\geq 3$, and $\eps\in(0,1)$, we say $\mC$ is \emph{$(d,\ell,\eps)$-bounded} if 
 $\mC$ satisfies the following conditions:
\begin{enumerate}
    \item[(C1)] $3\leq |C|\leq \ell$ for all $C\in\mC$;
    \item[(C2)] $\Delta(\mC^{(j)})\leq \ell d^{j-1}$ for all $3\leq j\leq \ell$;
    \item[(C3)] $\Delta_{j'}(\mC^{(j)})\leq d^{j-j'-\eps}$ for all $3\leq j\leq \ell$ and $2\leq j'\leq j-1$. 
\end{enumerate}

Finally, given a $(d,\ell,\eps)$-bounded conflict system $\mC$ for a hypergraph $\mH$, we will define a type of weight function which can be used to guarantee that the almost-perfect matching given by Theorem~\ref{thm:blackbox} below satisfies certain quasirandom properties. We say a function $w:\binom{\mH}{j}\rightarrow[0,\ell]$ for $j\in\N$ is a \emph{test function} for $\mH$ if $w(E)=0$ whenever $E\in\binom{\mH}{j}$ is not a matching, and we say $w$ is \emph{$j$-uniform}. For a function $w:A\rightarrow \R$ and a finite set $X\subset A$, let $w(X):=\sum_{x\in X} w(x)$. If $w$ is a $j$-uniform test function, then for each $E\subset \mH$, let $w(E)=w(\binom{E}{j})$. Given $j,d\in\N$, $\eps>0$, and a conflict system $\mC$ for hypergraph $\mH$, we say a $j$-uniform test function $w$ for $\mH$ is \emph{$(d,\eps,\mC)$-trackable} if $w$ satisfies the following conditions:
\begin{enumerate}
    \item[(W1)] $w(\mH)\geq d^{j+\eps}$;
    \item[(W2)] $w(\{{E\in \binom{\mH}{j}:E\supseteq E'\})\leq w(\mH})/d^{j'+\eps}$ for all $j'\in[j-1]$ and $E'\in\binom{\mH}{j'}$;
    \item[(W3)] $|(\mC_e)^{(j')}\cap(\mC_f)^{(j')}|\leq d^{j'-\eps}$ for all $e,f\in \mH$ with $w(\{E\in\binom{\mH}{j}:e,f\in E\})>0$ and all $j'\in[\ell-1]$;
    \item[(W4)] $w(E)=0$ for all $E\in\binom{\mH}{j}$ that are not $\mC$-free.
\end{enumerate}

\begin{theorem}[\cite{GJKKL}, Theorem 3.3]\label{thm:blackbox}
For all $k,\ell\geq 2$, there exists $\eps_0>0$ such that for all $\eps\in(0,\eps_0)$, there exists $d_0$ such that the following holds for all $d\geq d_0$. Suppose $\mH$ is a $k$-regular hypergraph on $n\leq \exp(d^{\eps^3})$ vertices with $(1-d^{-\eps})d\leq \delta(\mH)\leq \Delta(\mH)\leq d$ and $\Delta_2(\mH)\leq d^{1-\eps}$. Suppose $\mC$ is a $(d,\ell,\eps)$-bounded conflict system for $\mH$, and suppose $\mW$ is a set of $(d,\eps, \mC)$-trackable test functions for $\mH$ of uniformity at most $\ell$ with $|\mW|\leq \exp(d^{\eps^3})$. Then, there exists a $\mC$-free matching $\mM\subset \mH$ of size at least $(1-d^{-\eps^3})n/k$ with $w(\mM)=(1\pm d^{-\eps^3})d^{-j}w(\mH))$ for all $j$-uniform $w\in \mW$. 
\end{theorem}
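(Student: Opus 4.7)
The plan is to prove Theorem~\ref{thm:blackbox} via the \emph{semi-random nibble method}, running a carefully designed randomized greedy process in rounds and tracking all key quantities with the differential equation method. At a high level, I would construct the conflict-free matching $\mM$ iteratively: at each step, activate each surviving edge of $\mH$ independently with probability $\gamma/d$ for a small constant $\gamma>0$; then discard activated edges that (i)~share a vertex with another activated edge, (ii)~form a conflict of $\mC$ together with already-selected matching edges, or (iii)~lie in too many ``near-complete'' conflicts involving other activated edges. The surviving activated edges are added to $\mM$, their vertices (and all edges at those vertices) are removed from $\mH$, the residual conflict system $\mC'$ is updated accordingly, and the process repeats.

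For the analysis, I would introduce random variables tracking, throughout the process: (a)~the degree $\deg_{\mH'}(v)$ at each surviving vertex; (b)~the value $w(\mH')$ for each test function $w\in\mW$; and (c)~auxiliary ``conflict degrees'' counting, for each surviving $e\in\mH'$ and each $j'\leq\ell-1$, the number of $j'$-subsets of $\mH'$ that extend $e$ to an element of $\mC$. One expects these to follow deterministic trajectories of the form $\deg_{\mH'}(v)\approx d\cdot D(t)$ and $w(\mH')\approx w(\mH)\cdot D(t)^j$ for a $j$-uniform test function, where $D(t)$ is the solution of the associated ODE arising from the expected one-step change. The boundedness conditions (C1)--(C3) ensure that the expected one-step decrease of each quantity matches what the ODE predicts and that the cross-interaction terms from condition (iii) contribute a lower-order correction; the trackability conditions (W1)--(W4) play the same role for weight functions, with (W3) controlling second moments of the change in $w(\mH')$ by bounding overlap of conflict neighborhoods.

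The analytical workhorse is a martingale concentration inequality applied to the Doob martingale associated with each tracked variable (Freedman's inequality is well-suited, since one controls both the quadratic variation and the one-step increment). The hypotheses $n\leq\exp(d^{\eps^3})$ and $|\mW|\leq\exp(d^{\eps^3})$ are calibrated precisely so that the union bound over $O(n)$ degree bounds and the $|\mW|$ weight bounds is absorbed by the super-polynomial tail bound on each martingale. Running the nibble for $T\approx d^{1-\eps'}/\gamma$ rounds drives the maximum residual degree down to $d^{1-\eps'}$, leaving at most $d^{-\Omega(\eps^3)}n/k$ vertices uncovered; I would finish off with a short greedy completion stage on the sparse residual hypergraph to meet the $(1-d^{-\eps^3})n/k$ target.

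The hardest part, in my view, is step (iii): it is not enough to avoid conflicts that become \emph{complete} once a single edge is added; one must also prevent \emph{partial} conflicts $C\in\mC$ of size up to $\ell$ from accumulating with $|C\cap\mM|$ close to $|C|$, since such partial conflicts would later block essentially every extension and destroy quasirandomness. Handling this requires introducing and tracking auxiliary test functions built from $\mC_u$, then verifying that they themselves remain $(d,\eps,\mC)$-trackable after each round so that the inductive invariant carries through; this is exactly the role of the exponents in (C3) and (W3), and checking these invariants propagate across all $T$ rounds is the technically most delicate piece of the argument.
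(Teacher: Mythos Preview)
This theorem is not proved in the paper at all: it is quoted verbatim from \cite{GJKKL} (their Theorem~3.3) and used purely as a black box in Section~\ref{sec:UB}. There is therefore no ``paper's own proof'' to compare your proposal against; the authors simply invoke the statement and spend their effort verifying that their particular $\mH$, $\mC$, and test functions $w_v$, $w_{x,y,j_x,j_y,t}$, $w'_{x,y,j_x,j_y,t}$ satisfy the hypotheses (C1)--(C3) and (W1)--(W4).

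For what it is worth, your sketch is a reasonable high-level description of the nibble-with-conflicts strategy that \cite{GJKKL} and \cite{DP} actually carry out, and you have correctly identified the key technical difficulty: one must track not just surviving degrees and test-function values but also the partial-conflict ``degrees'' so that near-complete conflicts do not accumulate, and this is exactly where the codegree bounds (C3) and the common-link bound (W3) are spent. But turning this outline into an actual proof is a substantial paper in its own right (dozens of pages in \cite{GJKKL}); the present paper makes no attempt to do so, and neither should you for the purposes of this manuscript.
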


We will say that a hypergraph $\mH$ with $(1-d^{-\eps})d\leq \delta(\mH)\leq \Delta(\mH)\leq d$  is \emph{almost $d$-regular.}

In addition, we will use the Lov\'asz Local Lemma~\cite{AS}. For a set of events $\mE$ and a graph $G$ on vertex set $\mE$, we say that $G$ is a \emph{dependency graph} for $\mE$ if each event $E\in\mE$ is mutually independent from the family of events which are not adjacent to $E$ in $G$.

\begin{lemma}[Lov\'asz Local Lemma]\label{lem:LLL}
Let $\mE$ be a finite set of events in a probability space $\Theta$ and let $G$ be a dependency graph for $\mE$. Let $N(E)$ denote the neighborhood of $E$ in $G$ for each $E\in \mE$. Suppose there is an assignment $x:\mE\rightarrow[0,1)$ of real numbers to $\mE$ such that for all $E\in\mE$, we have 
\begin{equation}\label{LLL}
    \prob(E)\leq x(E)\prod_{E'\in N(E)} (1-x(E')).
\end{equation}
Then, the probability that none of the events in $\mE$ happens is 
\[\prob\left(\bigcap_{E\in\mE}\bar{E}\right)\geq\prod_{E\in\mE}(1-x(E))>0.\]
\end{lemma}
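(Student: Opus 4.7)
The plan is to prove by induction on $|S|$ the stronger auxiliary statement that, for every $E\in\mE$ and every $S\subseteq\mE\setminus\{E\}$,
\[\prob\!\left(E \mid \bigcap_{E'\in S}\bar{E'}\right)\leq x(E).\]
Once this is in hand, the lemma follows quickly: order $\mE$ as $\{E_1,\dots,E_m\}$ and use the chain rule to write
\[\prob\!\left(\bigcap_{k=1}^{m}\bar{E_k}\right)=\prod_{k=1}^{m}\prob\!\left(\bar{E_k}\mid\bigcap_{j<k}\bar{E_j}\right)\geq\prod_{k=1}^{m}(1-x(E_k)),\]
which is strictly positive since each $x(E_k)<1$.

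The base case $S=\emptyset$ follows directly from hypothesis~(\ref{LLL}), since $\prob(E)\leq x(E)\prod_{E'\in N(E)}(1-x(E'))\leq x(E)$ as each factor in the product lies in $(0,1]$. For the inductive step with $|S|\geq 1$, the key move is to split $S=S_1\sqcup S_2$ with $S_1=S\cap N(E)$ and $S_2=S\setminus N(E)$, so that $E$ is mutually independent from the family indexed by $S_2$. Applying Bayes' rule,
\[\prob\!\left(E\mid\bigcap_{E'\in S}\bar{E'}\right)=\frac{\prob\!\left(E\cap\bigcap_{E'\in S_1}\bar{E'}\mid\bigcap_{E'\in S_2}\bar{E'}\right)}{\prob\!\left(\bigcap_{E'\in S_1}\bar{E'}\mid\bigcap_{E'\in S_2}\bar{E'}\right)},\]
I would bound the numerator by dropping the $S_1$-constraint and invoking independence to obtain $\prob(E)\leq x(E)\prod_{E'\in N(E)}(1-x(E'))$. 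For the denominator, I would enumerate $S_1=\{F_1,\dots,F_r\}$ and expand via the chain rule; each resulting factor has the form $\prob(\bar{F_i}\mid\cdot)$ with a conditioning set strictly smaller than $S$, so the inductive hypothesis yields a factor of at least $1-x(F_i)$, giving a lower bound of $\prod_{E'\in S_1}(1-x(E'))$ on the denominator. Since $S_1\subseteq N(E)$, the ratio simplifies to at most $x(E)\prod_{E'\in N(E)\setminus S_1}(1-x(E'))\leq x(E)$, closing the induction.

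The main obstacle I anticipate is handling the mutual-independence argument in the numerator with sufficient care: the dependency graph guarantees only that $E$ is mutually independent from the entire family $\{E':E'\in S_2\}$, so I must verify that $E$ is independent of the specific Boolean combination $\bigcap_{E'\in S_2}\bar{E'}$, rather than merely pairwise independent with each $\bar{E'}$. This follows from the definition of mutual independence given in the setup, but it is the easiest subtlety to gloss over and is what makes the induction go through.
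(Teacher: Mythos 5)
The paper does not prove this lemma at all: it is quoted as a known result (Theorem~3.3-style black box) with a citation to Alon--Spencer, so there is no in-paper argument to compare against. Your proposal is the standard and correct proof of the asymmetric Lov\'asz Local Lemma -- induction on $|S|$ for the auxiliary bound $\prob(E\mid\bigcap_{E'\in S}\bar{E'})\leq x(E)$, splitting $S$ into $S\cap N(E)$ and its complement, bounding the numerator via mutual independence and the denominator via the chain rule and the inductive hypothesis -- and you correctly identify the mutual-independence subtlety as the delicate point. The only detail worth adding for completeness is that the conditioning events have positive probability (so the conditional probabilities are well defined); this follows from the same induction via the chain-rule computation you use at the end, and is routinely noted in textbook treatments.
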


We will also need the following concentration inequality. 

\begin{theorem}(McDiarmid's inequality~\cite{mcdiarmid})\label{thm:mcdiarmid}
Suppose $X_1, \dots, X_m$ are independent random variables. Suppose $X$ is a a real-valued random variable determined by $X_1, \dots, X_m$ such that changing the outcome of $X_i$ changes $X$ by at most $b_i$ for all $i \in [m]$. Then, for all $t >0$, we have 
\begin{align*}
    \mathbb{P}[|X-\mathbb{E}[X] | \geq t] \leq 2 \operatorname{exp}\Bigl( -\cfrac{2t^2}{\sum_{i \in [m]}b_i^2}\Bigr).
\end{align*}
\end{theorem}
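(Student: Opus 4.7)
The plan is to deduce the inequality from the Azuma--Hoeffding martingale concentration inequality applied to the Doob martingale of $X$. Writing $X = g(X_1, \ldots, X_m)$ for a measurable function $g$, set $\mathcal{F}_0 := \{\emptyset, \Omega\}$, $\mathcal{F}_i := \sigma(X_1, \ldots, X_i)$ for $1 \leq i \leq m$, and $Y_i := \mathbb{E}[X \mid \mathcal{F}_i]$. Then $Y_0 = \mathbb{E}[X]$, $Y_m = X$, and $(Y_i)_{i=0}^m$ is a martingale by the tower property.

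The central step is to show $|Y_i - Y_{i-1}| \leq b_i$ almost surely. By independence of the $X_j$'s, one can write $Y_i = h_i(X_1, \ldots, X_i)$, where
\begin{equation*}
h_i(x_1, \ldots, x_i) := \mathbb{E}[g(x_1, \ldots, x_i, X_{i+1}, \ldots, X_m)].
\end{equation*}
Similarly, $Y_{i-1} = h_{i-1}(X_1, \ldots, X_{i-1})$, where $h_{i-1}(x_1, \ldots, x_{i-1}) = \mathbb{E}[h_i(x_1, \ldots, x_{i-1}, X_i)]$. The bounded-differences hypothesis on $g$ implies that for every fixed prefix the map $x \mapsto h_i(x_1, \ldots, x_{i-1}, x)$ has oscillation at most $b_i$, so
\begin{equation*}
|Y_i - Y_{i-1}| \leq \sup_{x, x'} |h_i(X_1, \ldots, X_{i-1}, x) - h_i(X_1, \ldots, X_{i-1}, x')| \leq b_i.
\end{equation*}

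Finally, applying the Azuma--Hoeffding inequality to $(Y_i)$ yields
\begin{equation*}
\prob[|Y_m - Y_0| \geq t] \leq 2 \exp\Bigl(-\frac{2t^2}{\sum_{i=1}^m b_i^2}\Bigr),
\end{equation*}
which is the desired bound since $Y_m - Y_0 = X - \mathbb{E}[X]$. The main obstacle is the rigorous martingale-difference bound: the bounded-differences hypothesis is a pointwise statement about $g$, whereas $Y_i$ involves integrating out the later variables, so independence must be used crucially to pass the pointwise bound through the conditional expectation; without independence the conditional law of $(X_{i+1}, \ldots, X_m)$ could depend on $X_i$ and the argument breaks down. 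Azuma--Hoeffding itself then follows from a standard Chernoff-type moment-generating-function argument combined with Hoeffding's lemma, which supplies the conditional MGF bound $\mathbb{E}[e^{\lambda(Y_i - Y_{i-1})} \mid \mathcal{F}_{i-1}] \leq e^{\lambda^2 b_i^2 / 8}$ on each martingale difference, and optimization in $\lambda$ yields the stated exponent.
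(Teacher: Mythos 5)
Your proof is correct, but note that the paper does not prove this statement at all: McDiarmid's inequality is quoted as a known black-box tool (Theorem~\ref{thm:mcdiarmid}, cited to~\cite{mcdiarmid}), so there is no "paper proof" to compare against. What you give is the standard Doob-martingale/Azuma--Hoeffding derivation, and the details check out. The one place worth flagging is the constant in the exponent: a naive application of Azuma--Hoeffding from the bound $|Y_i-Y_{i-1}|\leq b_i$ alone (increments in an interval of length $2b_i$) would only yield $2\exp\bigl(-t^2/(2\sum_i b_i^2)\bigr)$, which is weaker than the stated $2\exp\bigl(-2t^2/\sum_i b_i^2\bigr)$. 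Your argument does supply the stronger input needed: since $Y_{i-1}$ is the $X_i$-average of $h_i(X_1,\ldots,X_{i-1},\cdot)$ and that map has oscillation at most $b_i$, the increment $Y_i-Y_{i-1}$ lies, conditionally on $\mathcal{F}_{i-1}$, in an interval of length $b_i$, which is exactly what justifies the conditional MGF bound $e^{\lambda^2 b_i^2/8}$ from Hoeffding's lemma and hence the sharp constant. It would be worth making that "interval of length $b_i$, not merely $|{\cdot}|\leq b_i$" point explicit, since it is the only step where the argument could silently lose a factor of $4$. Your emphasis on where independence is used (to factor the conditional law of $(X_{i+1},\ldots,X_m)$) is also the right place to be careful.
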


\section{Upper bound}\label{sec:UB}

Our coloring process occurs in two stages. 
The first coloring uses $n$ colors to color a majority of the edges of $K_n$. This coloring is defined by constructing appropriate hypergraphs $\mH$ and $\mC$ for which a $\mC$-free matching in $\mH$ corresponds to a partial coloring of $K_n$ which contains no ``conflicts" $C\in \mC$. In particular, this coloring will be a tiling of a subgraph of $K_n$ with 2-colored triangles, where no two  triangles which intersect in a vertex share a color. 

We will say that a copy of $K_5$ is \emph{bad} if it is colored with at most 7 colors. In the following lemma, we show that each bad $K_5$ in our partial coloring must contain one of a handful of \emph{bad subgraphs}, which we refer to as having \emph{type} $t\in\{a,b,c,d,e,f\}$.

\begin{lemma}\label{lem:badK5}
Let $f:E(K_n)\rightarrow C$ be an edge-coloring where every color class consists of vertex-disjoint edges and 2-edge paths, and any two monochromatic 2-edge paths share at most one vertex. In addition, assume any two 2-colored triangles which share a vertex must have disjoint sets of colors. 
Then every bad $K_5$ contains one of the following types of bad subgraphs (shown in Figure~\ref{fig:forbidden}):
\begin{enumerate}[(a)]
    \item An alternating $C_4$ formed by two monochromatic matchings,
    \item An alternating $C_5$ with one monochromatic matching and a second color on a disjoint edge and path,
    \item The subgraph $Q$ which contains a two-colored triangle and consists of two monochromatic matchings and one monochromatic path, 
    \item A subgraph consisting of one monochromatic matching and two monochromatic paths, 
    \item A subgraph consisting of two monochromatic matchings and one monochromatic path, or
    \item A subgraph consisting of three monochromatic matchings.
\end{enumerate}
\end{lemma}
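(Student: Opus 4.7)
The plan is to fix a bad copy of $K_5$, call it $T$, and analyze the shapes of the color classes restricted to $T$. Since every color class in the coloring is a vertex-disjoint union of $K_2$'s and $P_3$'s, its restriction to $T$ must take one of four forms: a single edge, a matching of two disjoint edges, a single $P_3$, or a $P_3$ together with a disjoint edge (which spans $V(T)$). Let $m$, $p$, and $q$ count the number of color classes of these last three forms, respectively. A count of the $10$ edges of $T$ shows that the total number of distinct colors on $T$ equals $10 - (m+p) - 2q$, so the bad hypothesis becomes
\[ m + p + 2q \geq 3. \]
A short vertex-counting argument using the assumption that any two monochromatic $P_3$'s share at most one vertex gives the complementary bound $p + q \leq 2$, since three monochromatic $P_3$'s in $K_5$ with pairwise intersection at most one would require at least $3\cdot 3 - 3 = 6$ distinct vertices.

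The body of the proof is a case analysis on $(m, p, q)$ under these two inequalities. The easy cases correspond to having three color classes of size at least two that are each a matching or a $P_3$: three matchings yield bad subgraph~(f), two matchings together with a $P_3$ yield~(e), and one matching together with two $P_3$'s yields~(d). When $q\geq 1$, the $P_3 \sqcup K_2$ color class spans $V(T)$, and its three edges can be completed into a Hamiltonian $C_5$ of $T$ in exactly two ways, each via a pair of disjoint edges; if either of these closing pairs is a monochromatic matching in a second color, the configuration is precisely the alternating $C_5$ of bad subgraph~(b).

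The remaining subcases are those in which $q\geq 1$ but neither closing pair is monochromatic, or in which $m$ is too small for the easy cases to apply. Here I would exploit the hypothesis that any two $2$-colored triangles sharing a vertex have disjoint sets of colors. Each monochromatic $P_3$ in $T$ forms a $2$-colored triangle with its closing edge, and a second monochromatic $P_3$ (coming from another $P_3$ class or from the $P_3$ inside a $P_3 \sqcup K_2$ class) produces a second $2$-colored triangle that must share a vertex with the first inside the five-vertex $T$. Disjointness of their color sets then sharply restricts which colors can appear on the closing edges and forces either a pair of monochromatic matchings forming an alternating $C_4$ (giving~(a)) or the specific structure of the subgraph~$Q$ in~(c).

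The main obstacle will be dispatching the subcases where $q\geq 1$ but neither of the two Hamiltonian closings of the $P_3 \sqcup K_2$ is monochromatic. In each such subcase one must fix a concrete labelling of $V(T)$, carefully examine how the remaining large color classes can interact with the spanning $P_3 \sqcup K_2$, and then appeal to both the $P_3$-intersection and $2$-colored triangle hypotheses to force one of~(a),~(b), or~(c). Although each subcase is conceptually routine, the bookkeeping is delicate because the closing edges, the remaining matching or $P_3$ classes, and the induced $2$-colored triangles must all be tracked simultaneously.
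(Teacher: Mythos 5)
Your skeleton---classifying the restriction of each color class to the bad $K_5$ as a $K_2$, a $2$-edge matching, a $P_3$, or a $P_3\sqcup K_2$, reducing badness to $m+p+2q\ge 3$, and deriving $p+q\le 2$ from the path-intersection hypothesis---is essentially the paper's argument, and the cases $(m,p,q)\in\{(3,0,0),(2,1,0),(1,2,0)\}$ together with the ``monochromatic closing pair'' subcase of $q\ge 1$ are handled correctly. The problem is that the entire content of the lemma sits in the part you defer as ``the main obstacle'': you assert, but do not show, that the remaining subcases are forced into types (a), (b), or (c), and this assertion cannot in fact be derived from the hypotheses as you (and the lemma) state them. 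Concretely, on $V(K_5)=\{0,\dots,4\}$ put color $i$ on $\{01,12,34\}$ and a second color $\ell$ on the matching $\{02,13\}$, with the remaining five edges receiving distinct fresh colors. This $K_5$ sees exactly $7$ colors; the only monochromatic $P_3$ is the one on $0,1,2$ and the only $2$-colored triangle is $012$, so both hypotheses hold vacuously; neither Hamiltonian closing of the $i$-class is monochromatic; the union of the two repeated classes is not a $C_5$; every pair of monochromatic $2$-edge matchings here spans five vertices, so there is no alternating $C_4$; and with only two repeated colors none of (c)--(f) can occur. Most other placements of the $\ell$-matching behave the same way, so your plan stalls exactly where you predicted difficulty.

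What eliminates these configurations in the intended application is structure beyond the lemma's listed hypotheses: in the coloring produced by the matching in $\mH$, every colored edge lies in a $2$-colored triangle, and by property~(\ref{property2}) of Theorem~\ref{thm:coloringproperties} the base of such a triangle is an entire component of its color class with the apex isolated in that color. With that, the $\ell$-edge $13$ would itself generate a $2$-colored triangle through vertex $1$ or $3$ sharing the color $\ell$ with the triangle $012$, contradicting the disjoint-colors hypothesis; only the closings $\{23,40\}$, $\{24,30\}$ (type (b)) and $\{02,34\}$ (leading to type (c)) survive. To complete your proof you must either import this additional hypothesis or restrict attention to such colorings, and then actually carry out the subcase analysis rather than describe it. A smaller secondary gap: the conflicts used downstream are the nine specific graphs of Figure~\ref{fig:forbidden}, so for types (d)--(f) one must still check---as the paper does for type (f), using the absence of an alternating $C_4$ and of a third monochromatic $P_3$---that each coarse combination of matchings and paths contains one of those nine configurations; your proposal stops at the coarse combination.
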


\begin{figure}
    \centering    
    \begin{subfigure}[b]{0.3\textwidth}
        \centering
        \begin{tikzpicture}[scale = 0.75]
        
            \draw \foreach \x  in {0,1,...,4}{(90*\x+45:1.5)node[vtx](\x){}};
            
            \draw[ultra thick, blue] (0)--(1)  node[midway, above] {$\ell$};
            \draw[ultra thick] (1)--(2) node[midway, left] {$i$};
            \draw[ultra thick, blue] (2)--(3) node[midway, below] {$\ell$};
            \draw[ultra thick] (3)--(0) node[midway, right] {$i$};
            \end{tikzpicture}

        \caption{Alternating $C_4$}\label{C4}
    \end{subfigure}    
    \begin{subfigure}[b]{0.3\textwidth}
        \centering
        \begin{tikzpicture}[scale = 0.75]
        
            \draw \foreach \x  in {0,1,...,5}{(72*\x+90:1.5)node[vtx](\x){}};
            
            \draw[ultra thick] (0)--(1)  node[midway, above] {$i$};
            \draw[blue, ultra thick] (4)--(0) node[midway, above] {$\ell$};
            \draw[ultra thick, blue] (2)--(3) node[midway, below] {$\ell$};
            \draw[ultra thick] (3)--(4) node[midway, right] {$i$};
            \draw[ultra thick] (1)--(2) node[midway,left] {$i$};
            \end{tikzpicture} 
         
        \caption{Alternating $C_5$}
        \label{C5}
    \end{subfigure}
    \begin{subfigure}[b]{0.3\textwidth}
    \centering         \begin{tikzpicture}[scale = 0.75]
        
            \draw \foreach \x  in {0,1,...,5}{(72*\x+90:1.5)node[vtx](\x){}};
            
            \draw[ultra thick] (0)--(1)  node[midway, above] {$i$};
            \draw[ultra thick] (1)--(2) node[midway, left] {$i$};
            \draw[ultra thick, blue] (0)--(2) node[midway, left] {$\ell$};
            \draw[blue, ultra thick] (4)--(3) node[midway, right] {$\ell$};
            \draw[ultra thick, red] (2)--(3) node[midway,below]  {$m$};
            \draw[ultra thick, red] (4)--(1) node[midway,above]  {$m$};  
            \end{tikzpicture}
        \caption{Subgraph $Q$}\label{pentagon}
    \end{subfigure}

    \vspace{.5cm}

     \begin{subfigure}[b]{\textwidth}
     \centering
        \begin{tikzpicture}[scale = 0.75]

            \draw \foreach \x  in {0,1,...,5}{(72*\x+90:1.5)node[vtx](\x){}};
            
            \draw[ultra thick] (0)--(1)  node[midway, above] {$i$};
            \draw[ultra thick] (1)--(2) node[midway, left] {$i$};
            \draw[ultra thick, blue] (1)--(4) node[midway, below] {$\ell$};
            \draw[blue, ultra thick] (3)--(4) node[midway, right] {$\ell$};
            \draw[ultra thick, red] (2)--(3) node[midway,below]  {$m$};
            \draw[ultra thick, red] (4)--(0) node[midway,above]  {$m$};
        \end{tikzpicture}
    \hspace{.2in} 
    \begin{tikzpicture}[scale = 0.75]
        
            \draw \foreach \x  in {0,1,...,5}{(72*\x+90:1.5)node[vtx](\x){}};
            
            \draw[ultra thick] (0)--(1)  node[midway, above] {$i$};
            \draw[ultra thick] (1)--(2) node[midway, left] {$i$};
            \draw[ultra thick, blue] (0)--(4) node[midway, above] {$\ell$};
            \draw[blue, ultra thick] (3)--(4) node[midway, right] {$\ell$};
            \draw[ultra thick, red] (1)--(4) node[midway,below]  {$m$};
            \draw[ultra thick, red] (2)--(3) node[midway,below]  {$m$};   
        \end{tikzpicture}
        \hspace{.2in} 
    \begin{tikzpicture}[scale = 0.75]
        
            \draw \foreach \x  in {0,1,...,5}{(72*\x+90:1.5)node[vtx](\x){}};
            
            \draw[ultra thick] (0)--(1)  node[midway, above] {$i$};
            \draw[ultra thick] (1)--(2) node[midway, left] {$i$};
            \draw[ultra thick, blue] (1)--(4) node[midway, above] {$\ell$};
            \draw[blue, ultra thick] (1)--(3) node[midway, below] {$\ell$};
            \draw[ultra thick, red] (2)--(3) node[midway,below]  {$m$};
            \draw[ultra thick, red] (4)--(0) node[midway,above]  {$m$};      
            \end{tikzpicture}
        \hspace{.2in}
        \begin{tikzpicture}[scale = 0.75]
            \draw \foreach \x  in {0,1,...,5}{(72*\x+90:1.5)node[vtx](\x){}};
            
            \draw[ultra thick] (0)--(1)  node[midway, above] {$i$};
            \draw[ultra thick] (1)--(2) node[midway, left] {$i$};
            \draw[ultra thick, blue] (0)--(4) node[midway, above] {$\ell$};
            \draw[blue, ultra thick] (4)--(3) node[midway, right] {$\ell$};
            \draw[ultra thick, red] (1)--(3) node[midway,above]  {$m$};
            \draw[ultra thick, red] (2)--(4) node[midway,above]  {$m$};   
            \draw[ultra thick, white] (2)--(3) node[midway,below]  {$m$};
            \end{tikzpicture}
    \caption{One monochromatic matching and two monochromatic paths}\label{2M1P}
    \end{subfigure}
    
    \vspace{0.5cm}

    \begin{subfigure}[b]{\textwidth}
        \centering
        \begin{tikzpicture}[scale = 0.75]
        
            \draw \foreach \x  in {0,1,...,5}{(72*\x+90:1.5)node[vtx](\x){}};
            
            \draw[ultra thick] (0)--(1)  node[midway, above] {$i$};
            \draw[ultra thick] (1)--(2) node[midway, left] {$i$};
            \draw[ultra thick, blue] (1)--(4) node[midway, below] {$\ell$};
            \draw[blue, ultra thick] (0)--(3) node[midway, right] {$\ell$};
            \draw[ultra thick, red] (2)--(3) node[midway,below]  {$m$};
            \draw[ultra thick, red] (4)--(0) node[midway,above]  {$m$};  
            \end{tikzpicture}
        \hspace{.2in}
        \begin{tikzpicture}[scale = 0.75]
        
            \draw \foreach \x  in {0,1,...,5}{(72*\x+90:1.5)node[vtx](\x){}};
            
            \draw[ultra thick] (0)--(1)  node[midway, above] {$i$};
            \draw[ultra thick] (1)--(2) node[midway, left] {$i$};
            \draw[ultra thick, blue] (0)--(4) node[midway, above] {$\ell$};
            \draw[blue, ultra thick] (1)--(3) node[midway, above] {$\ell$};
            \draw[ultra thick, red] (2)--(3) node[midway,below]  {$m$};
            \draw[ultra thick, red] (4)--(1) node[midway,above]  {$m$};  
            \end{tikzpicture}
        \caption{Two monochromatic matchings and one monochromatic path}\label{1M2P}
    \end{subfigure} \hfil
    
    \begin{subfigure}[b]{\textwidth}
        \centering
        \begin{tikzpicture}[scale = 0.75]
        
            \draw \foreach \x  in {0,1,...,5}{(72*\x+90:1.5)node[vtx](\x){}};
            
            \draw[ultra thick] (0)--(1)  node[midway, above] {$i$};
            \draw[ultra thick] (2)--(3) node[midway, below] {$i$};
            \draw[ultra thick, blue] (1)--(2) node[midway, left] {$\ell$};
            \draw[ultra thick, blue] (3)--(4) node[midway, right] {$\ell$};
            \draw[ultra thick, red] (1)--(3) node[midway,above]  {$m$};
            \draw[ultra thick, red] (4)--(0) node[midway,above]  {$m$};  
            \end{tikzpicture}
        \hspace{.2in}
                \begin{tikzpicture}[scale = 0.75]
        
            \draw \foreach \x  in {0,1,...,5}{(72*\x+90:1.5)node[vtx](\x){}};
            
            \draw[ultra thick] (0)--(1)  node[midway, above] {$i$};
            \draw[ultra thick] (2)--(3) node[midway, below] {$i$};
            \draw[ultra thick, blue] (1)--(2) node[midway, left] {$\ell$};
            \draw[ultra thick, blue] (3)--(4) node[midway, right] {$\ell$};
            \draw[ultra thick, red] (1)--(4) node[midway,below]  {$m$};
            \draw[ultra thick, red] (3)--(0) node[midway,right]  {$m$};  
            \end{tikzpicture}
        \caption{Three monochromatic matchings}\label{3M}
    \end{subfigure}
    
    \caption{Bad subgraphs in $K_n$ which correspond to edges in the conflict hypergraph $\mC$}
    \label{fig:forbidden}
\end{figure}

\begin{proof}
Fix a bad copy $K$ of $K_5$, and suppose it does not contain a type $a$ bad subgraph. (That is, $K$ has no 2-colored copy of $C_4$.) We will show that $K$ must contain one of the other five types of bad subgraphs.

In order for $K$ to receive at most 7 colors on its 10 edges, there must be at least three color repeats. Since every color class contains vertex-disjoint edges and 2-edge paths, each color can appear at most 3 times on $K$. Moreover, if a color $i\in C$ appear three times, then it must appear on a 2-edge path and an edge. In this case, any other color repeat must appear on a monochromatic 2-edge matching, because any two monochromatic 2-edge paths can only intersect in one vertex and any two triangles which share a vertex have disjoint colors. Therefore, $K$ contains a subgraph of type $b$.

Thus, we may assume that each color appears at most twice on $K$. So, $K$ must contain at least three monochromatic 2-edge matchings or paths. 
Note that since any two monochromatic 2-edge paths can intersect in at most one vertex, any three such paths would require at least six vertices and hence cannot be contained in $K$. Therefore, $K$ must contain at least one monochromatic 2-edge matching and at most two monochromatic 2-edge paths, resulting in one of the bad subgraphs of type $c$, $d$, $e$, or $f$. 

Moreover, it can be easily shown in these cases that $K$ must contain one of the nine subgraphs in Figure~\ref{fig:forbidden}. For example, any bad subgraph of type $f$ must contain three monochromatic 2-edge matchings, say in colors $i,\ell,m$. Since there are no monochromatic 2-edge paths and no alternating $C_4$ in this case, the subgraph with edges of colors $i$ and $\ell$ must form an alternating path $P_5$. Then there are only two non-isomorphic ways to place the matching of color $m$ without creating an alternating $C_4$. The other types of bad subgraphs can be checked similarly.
\end{proof}

We will define a hypergraph $\mH$ and conflict system $\mC$ so that the edges of $\mC$ correspond to bad subgraphs in $K_n$. Our key technical result below, Theorem~\ref{thm:coloringproperties}, guarantees that our choices of $\mH$ and $\mC$ satisfy the requirements of Theorem~\ref{thm:blackbox} needed to give a $\mC$-free matching of $\mH$ which corresponds to a partial $(5,8)$-coloring of $K_n$. 

To color the remaining edges of $K_n$, we will apply a random coloring using a set of $n^{1-\delta}$ new colors. Properties (\ref{property4}) and (\ref{property5}) of Theorem~\ref{thm:coloringproperties} allow us to use the Lov\'asz Local Lemma to show that the resulting union of these two colorings is a $(5,8)$-coloring of $K_n$. 

In order to state Theorem~\ref{thm:coloringproperties}, we need some additional terminology. Given a partial edge-coloring of $K_n$, we say a set of uncolored edges $E'\subset E(K_n)$ \emph{completes} a bad subgraph of type $t\in \{a,b,c,d,e,f\}$ if there is a way to assign colors to $E'$ which would create a bad subgraph of type $t$. In particular, we will be interested in the cases where an edge or a 2-edge matching completes a bad subgraph.

\begin{theorem}\label{thm:coloringproperties}
There exists $\delta>0$ such that for all sufficiently large $n$ in terms of $\delta$, there exists an edge-coloring of a subgraph $F\subset K_n$ with at most $n$ colors and the following properties:
\begin{enumerate}[(I)]
    \item Every color class consists of vertex-disjoint edges and 2-edge paths. \label{property1}
    \item For all triangles $xyz$ in $K_n$ where $xy$ and $yz$ receive the same color $i$ and $xz$ is colored $\ell$, the vertex $y$ is an isolated vertex in color class $\ell$, and $xz$ forms a component in color class $\ell$. \label{property2}
    \item There are no bad subgraphs in $F$. \label{property3}
    \item The graph $L=K_n-E(F)$ has maximum degree at most $n^{1-\delta}$. \label{property4}
    \item For each uncolored edge $xy\in E(L)$ and for each type $t\in\{a,b,c,d,e,f\}$ of bad subgraph, there are at most $n^{1-\delta}$ edges $x'y'\in E(L)$ with $\{x,y\}\cap \{x',y'\}=\emptyset$ for which $\{xy,x'y'\}$ completes a bad subgraph of type $t$. \label{property5}
\end{enumerate}
\end{theorem}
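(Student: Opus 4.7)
The plan is to construct a hypergraph $\mH$ whose matchings encode valid partial colorings of $K_n$ and a conflict system $\mC$ whose edges encode the bad subgraphs of Lemma~\ref{lem:badK5}, then invoke Theorem~\ref{thm:blackbox}. For the vertex set of $\mH$, we take for each $e \in E(K_n)$ an ``edge vertex'' $v_e$, and for each pair $(x, i)$ with $x \in V(K_n)$ and $i \in [n]$ we independently include either the unprimed $x_i$ with probability $p$ or the primed $x_i'$ with probability $1-p$. For each triangle $xyz$ in $K_n$, each ordered pair of distinct colors $(i,\ell)$, and each choice of minority edge (say $xy, yz$ colored $i$ and $xz$ colored $\ell$), we consider the potential hyperedge
\[ H \;=\; \{v_{xy},\, v_{yz},\, v_{xz},\; x_i,\, y_i,\, z_i,\; x_\ell,\, z_\ell,\; y_\ell'\}, \]
which belongs to $\mH$ iff all nine listed vertices lie in $V(\mH)$. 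Requiring that the average degrees of edge vertices, unprimed vertex-color vertices, and primed vertex-color vertices coincide (since each hyperedge occupies $3$, $5$, and $1$ slots respectively) forces $p = 5/6$. A Chernoff and union-bound argument on the random $V(\mH)$ then yields, with positive probability, an almost $d$-regular $\mH$ with $d = \Theta(n^3)$ and $\Delta_2(\mH) \leq d^{1-\eps}$.

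Next, the conflict system $\mC$ has $V(\mC) = E(\mH)$, and its edges are the minimal sets of hyperedges whose joint encoded coloring contains one of the nine bad subgraphs in Figure~\ref{fig:forbidden}. A direct case analysis shows that each such set has size between $3$ and $6$, giving (C1) with $\ell = 6$; counting extensions of prescribed sub-collections for each bad type verifies (C2) and (C3). In parallel, we assemble a family $\mW$ of test functions: a $1$-uniform $w_v$ for each $v \in V(K_n)$ counting matched triangles containing $v$ (intended to yield (IV)), and a $2$-uniform $w_{e,t}$ for each pair of an edge $e \in E(K_n)$ and a type $t \in \{a,b,c,d,e,f\}$ counting ordered pairs of hyperedges that, together with $e$, would complete a type-$t$ bad subgraph (intended to yield (V)). Polynomial counts suffice to verify (W1)--(W4).

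Theorem~\ref{thm:blackbox} then produces a $\mC$-free matching $\mM$, and we define $F$ by assigning to each matched triangle the colors prescribed by its hyperedge. Property (I) is immediate from the matching structure, as each color class is a vertex-disjoint union of $2$-edge paths (one per majority appearance) and single edges (one per minority appearance). Property (II) is the point of the primed-vertex device: the inclusion of $y_\ell'$ in the matched hyperedge forces $y_\ell \notin V(\mH)$, so $y$ has no edge of color $\ell$ in $F$, while the matched usage of $x_\ell$ and $z_\ell$ in the same hyperedge prevents any other color-$\ell$ edge at $x$ or $z$. Property (III) follows from $\mC$-freeness, and (IV) and (V) follow from the promised concentration $w(\mM) = (1 \pm d^{-\eps^3}) d^{-j} w(\mH)$ applied to the $w_v$ and $w_{e,t}$, yielding the bound $n^{1-\delta}$ for some $\delta > 0$.

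The main obstacle will be the uniform verification of (C2), (C3) and (W1)--(W4) across the nine distinct bad configurations: each has its own extension structure, and a single choice of $\eps$ must simultaneously force the bounds $d^{j-j'-\eps}$ and $w_{e,t}(\mH) \geq d^{2+\eps}$ to hold for every bad type, every uncolored edge $e$, and every small subset of a conflict. A secondary technical step is the careful dependency analysis needed so that the random choice of $V(\mH)$ yields, with positive probability, all of the degree and codegree hypotheses of Theorem~\ref{thm:blackbox} simultaneously.
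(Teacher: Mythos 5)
Your construction of $\mH$ (including the primed/unprimed device, with the unprimed copy kept with probability $5/6$, which matches the paper's choice of $p=1/6$ for the primed copies), your conflict system $\mC$, and your derivation of properties (I)--(IV) all track the paper's argument. The genuine gap is in property (V). The test functions $w_{e,t}$ you propose are only the analogue of the paper's $w_{x,y,j_x,j_y,t}$, the indicators of the subconflict families $\mP_{j_x,j_y,t}$, and the concentration supplied by Theorem~\ref{thm:blackbox} for these tells you that the number of $(j-2)$-element subconflicts of the matching $M$ that could be completed by $xy$ together with \emph{some} disjoint edge $x'y'$ of $K_n$ is $(1\pm d^{-\eps^3})d^{-(j-2)}\cdot\Theta(nd^{j-2})=\Theta(n)$. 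That is the wrong order of magnitude: property (V) demands $O(n^{1-\delta})$, and it holds only because almost all of those $\Theta(n)$ candidate completing edges $x'y'$ end up \emph{colored} in $F$, hence are not in $E(L)$. Capturing this requires a second family of test functions --- the paper's $(j-1)$-uniform $w'_{x,y,j_x,j_y,t}$, indicators of the families $\mT_{j_x,j_y,t}$ obtained by appending to each subconflict the $\mH$-edge covering $x'y'$ --- and one concludes by a subtraction, $\sum_{j_x,j_y} j_xj_y\bigl|\binom{M}{j-2}\cap \mP_{j_x,j_y,t}\bigr|-\bigl|\binom{M}{j-1}\cap \mT_{j_x,j_y,t}\bigr|\leq n^{1-\delta}$, in which the main terms $n/3^{j-2}$ cancel and only the error terms survive. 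Without this second family and the $j_x,j_y$ multiplicity bookkeeping that makes the main terms match, property (V) does not follow; and it cannot be dispensed with, since in the Local Lemma stage a fixed uncolored edge would then lie in $\Theta(n)$ events $B_{H,t}$ with $m=2$ rather than $O(n^{1-2\delta})$, and condition~(\ref{LLL}) would fail.

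Two secondary imprecisions. First, $w_{e,t}$ cannot be uniformly $2$-uniform: for conflicts of size $j\in\{5,6\}$ the relevant subconflicts have size $j-2\in\{3,4\}$, and the anchoring must be at the two endpoints $x$ and $y$ separately (the paper requires $C'$ to contain $\mH$-edges $e_x\ni x$ and $e_y\ni y$ with prescribed color/vertex intersection patterns), not merely at the edge $e$. Second, the conflicts in fact all have size between $4$ and $6$, since a monochromatic $2$-path or a $2$-colored triangle contributes one $\mH$-edge while a monochromatic $2$-matching contributes two; your range $3\le|C|\le 6$ still satisfies (C1), but the size-$3$ case does not occur, and asserting it would needlessly add a $j=3$ case to the verification of (C2) and (C3).
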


\subsection{Proof of Theorem~\ref{thm:coloringproperties}}

 For clarity, we use $k$ throughout the proof when discussing the number of colors to distinguish between counting vertices and counting colors. 

We start by constructing a random vertex set as follows. Let $W = \bigcup_{i \in [k]} W_i$, where $W_1, \dots, W_k$ are disjoint copies of $V(K_n)$. Initially, for $i \in [k]$, set $V_i = V_i' = \emptyset$. Now, for each vertex $w_i$ in $W_i$, independently with probability $p=1/6$ add a copy $v_i'$ of $w_i$ to the set $V_i'$. Otherwise, with probability $1-p$, add a copy $v_i$ of $w_i$ to the set $V_i$. Let $V=\bigcup_{i\in[k]}V_i$ and $V'=\bigcup_{i\in[k]}V_i'$. Note that while this process is similar to the one used by Joos and Mubayi~\cite{JM}, it differs in that we add vertices to a new set $V'$ with probability $p$ rather than simply deleting the vertices. 

Now, we construct our 9-uniform hypergraph $\mH$ with vertex set $E(K_n) \cup V \cup V'$. For each triangle $K=uvw$ in $K_n$ and each pair of distinct colors $i, \ell \in [k]$, we add the edge
\[\{uv, uw, vw, u_i, v_i, w_i, v_{\ell}, w_{\ell}, u_{\ell}'\}\]
to $\mH$ if $u_i, v_i, w_i, v_{\ell}, w_{\ell} \in V$ and $u_{\ell}' \in V'$. We will denote this edge in $\mH$ by $e=(K,i,\ell)$. 
Note that a matching in $\mH$ corresponds to a collection of edge-disjoint triangles $uvw$ in $K_n$, where $uv$ and $uw$ have color $i$ and $vw$ has color $\ell$, and where no other 2-colored triangle containing $u$ uses color $\ell$ (because only one of the two vertices $u_{\ell}$ and $u_{\ell}'$ exists in $V(\mH)$). Thus, a matching in $\mH$ yields a partial coloring of $K_n$ which satisfies properties~(\ref{property1}) and (\ref{property2}) of Theorem~\ref{thm:coloringproperties}. 

In order to achieve properties~(\ref{property3})-(\ref{property5}), we will later define an appropriate conflict hypergraph $\mC$ and trackable test functions for $\mH$. 

But first, we  check that the degree and codegree conditions for $\mH$ needed to apply Theorem~\ref{thm:blackbox} are satisfied. In particular, we will show that $\mH$ is essentially $d$-regular, where $d=\frac{5^5}{2\cdot 6^5}n^3$. Then, it is easy to check that $\Delta_2(\mH)=O(d/n)< d^{1-\eps}$ for $\eps\in(0,\frac{1}{4})$. 
As in~\cite{JM}, each vertex in $\mH$ of the form $uv\in E(K_n)$ has expected degree 
\[\mathbb{E}[d_{\mH}(uv)]=(n-2)\cdot k(k-1)\cdot 3\cdot (1-p)^5p\]
and each  vertex of the form $u_i\in V_i$ has expected degree 
\[\mathbb{E}[d_{\mH}(u_i)]=\left(\binom{n-1}{2}+(n-1)(n-2)+(n-1)(n-2)\right)\cdot (k-1)\cdot (1-p)^4p.\] 
In addition, we have that the expected degree of a vertex of the form $u_i'$ is \[\mathbb{E}[d_{\mH}(u_i')]=\binom{n-1}{2}\cdot (k-1)\cdot (1-p)^5.\] 
By our choices of $p=1/6$ and $k=n$, each vertex in $\mH$ has degree $d\approx \frac{5^5}{2\cdot 6^5}n^3.$ 

We can apply McDiarmid's inequality to show concentration of these degrees for each type of vertex in $\mH$. First,  fix $uv\in E(G)$ and consider how the degree of $uv$ in $\mH$ will change if some $w\in V$ is instead placed in $V'$ (or vice versa). For each $w\in V\cup V'$, let $b_w=n^2$ if $w$ is a copy of $u$ or $v$ and $b_w=n$ otherwise. Thus, $\sum_w b_w^2=O(n^5)$. Now instead fix $u_i\in V_i$ or $u_i'\in V_i'$. For each $w\in V\cup V'\backslash \{u_i\}$, let $b_w=n^2$ if $w$ is a copy of $u$ or $w\in V_i\cup V_i'$, and let $b_w=n$ otherwise. Again, we have $\sum_w b_w^2=O(n^5)$, so in either case, applying Theorem~\ref{thm:mcdiarmid} shows concentration on an interval of length $O(n^{5/2})$. 
Thus, we have with high probability that
\begin{align*}
   \frac{5^5}{2\cdot 6^6}n^3 - O(n^{8/3})\leq \delta(\mH) \leq \Delta(\mH) \leq \frac{5^5}{2\cdot 6^6}n^3 + O(n^{8/3}).
\end{align*}

In addition, McDiarmid's inequality implies that for any two $u_i,v_i\in V$, there are $O(n^2)$ edges containing both $u_i$ and $v_i$ with high probability. To see this, define $b_w$ for $w\in V\backslash \{u_i,v_i\}$ by $b_w=n$ if $w$ is a copy of $u$ or $v$ or if $w\in V_i$, and by $b_w=1$ otherwise. 

We will also need to show that another quantity is close to its expected value, but for now, we assume that this is the case and fix a choice for $\mH$ with these properties to refer to as the deterministic 9-uniform hypergraph $\mH$. We set $d=\Delta(\mH)$, so $\mH$ is essentially $d$-regular and $d=\Theta(n^3)$. 

We now define a hypergraph $\mC$ with vertex set $E(\mH)$ and edges of size 4, 5, and 6 which is a conflict system for $\mH$. The edges of $\mC$ correspond to bad subgraphs in $K_n$ which arise from 4, 5, or 6 triangles in $K_n$ that form a matching in $\mH$.
More precisely, given a bad subgraph $H$ of type $t$, we include $E_H=\{e_{xy}:xy\in E(H)\}$ in $\mC$ if $E_H$ is a matching in $\mH$.  We call this edge of $\mC$ a \emph{conflict} of type $t$. 
It is easy to verify that for every type of bad subgraph $H$, $4\le |E_H|\le 6$. 
Indeed, this follows from the fact that any monochromatic path of length 2 in $H$ corresponds to a single element in $E_H$, every 2-colored triangle in $H$ corresponds to a single element in $E_H$, and every monochromatic matching of size 2 corresponds to a two elements in $E_H$.
For every graph edge $xy$ in $H$, let $e_{xy}$ be the $\mH$-edge in $E_H$ containing the element $xy$.


We now check the degree conditions needed to show that the conflict hypergraph $\mC$ is $(d,O(1),\eps)$-bounded for all $\eps\in(0,\frac{1}{4})$. Condition (C1) is met since $4\leq |C|\leq 6$ for all $C\in\mC$.  

For condition (C2), we consider the maximum degree in $\mC^{(j)}$ for each $4\leq j\leq 6$. To this end, fix $e=(K,i,\ell)\in V(\mC)$ with $K=xyz$ where $xy$ and $yz$ receive color $i$ and $xz$ receives color $\ell$. To count the conflicts of type $a$ or $b$ containing $e$, note that there are $O(n^4)$ ways to pick a second edge in $\mH$ containing color $i$ or $\ell$, say $e'=(K',i,m)$ with $K'=uvw$. Then, each of the other two $\mH$-edges in the conflict must contain a graph edge with one vertex in $\{x,y,z\}$ and the other in $\{u,v,w\}$, and these two $\mH$-edges must share at least one color, so there are $O(d)$ ways to pick a third edge in $\mH$ and $O(d/n)$ ways to pick a fourth edge in $\mH$ to complete the conflict of type $a$ or $b$. 

By similar reasoning, there are $O(n^9)$ conflicts of type $c$ in $\mC^{(4)}$ containing $e$ to which $e$ contributes a single graph edge. There are an additional $O(n^9)$ conflicts of type $c$ containing $e$ to which $e$ contributes all three graph edges, $xy$, $yz$, and $zx$. To see this, note that there are $O(n^4)$ ways to pick a second $\mH$-edge $K'$ containing the color $\ell$, after which the third and fourth $\mH$-edges in the conflict must each contain a graph edge with one vertex from $\{x,y,z\}$ and one vertex from $K'$. Thus, there are $O(d)$ choices for the third edge in the conflict, and since the third and fourth edges also share a color, there are $O(d/n)$ choices for the fourth edge to complete a type $c$ conflict. 

It remains to count the conflicts of type $t\in\{d,e,f\}$ which contain $e$. We may assume for any bad subgraph $H$ corresponding to a conflict $C$ of type $t$ which contains $e$ that $e$ contributes either the monochromatic path $xyz$ in color $i$ or the edge $xy$ in color $i$. Let $j$ be the size of a conflict of type $t$. To count the conflicts of type $t$ containing $e$ in the first way, note that there are there are $O(n^2)$ ways to pick the other two graph vertices in $H$, $\Delta(\mH)=O(d)$ ways each to pick another $j-3$ $\mH$-edges in $C$ which will be in matchings of distinct colors (since we know each of these $\mH$-edges must contain a particular graph edge), and $\Delta_2(\mH)=O(d/n)$ ways each to pick the last two $\mH$-edges to complete $C$ (since for each, we know either a graph edge and a color which it must contain or we know two graph edges which it must contain). Similarly, for edges in $\mC^{(j)}$ containing $e$ in the second way, there are $O(n^3)$ ways to pick the other graph vertices in $C$, $\Delta(\mH)=O(d)$ ways each to pick another $j-4$ $\mH$-edges in $C$ (since we know each of these $\mH$-edges must contain a particular graph edge), and $O(d/n)$ choices each for the last three $\mH$-edges (since we know for each of these either  a graph edge and color or two graph edges which it must contain).  Thus, $\Delta(\mC^{(j)})=O(d^{j-1})$ for each $4\leq j\leq 6$. 

Finally, condition (C3) can be verified using very similar arguments to bound the codegrees by $\Delta_{j'}(\mC^{(j)})=O(d^{j-j'}/n)<d^{j-j'-\frac{1}{4}}$ for all $4\leq j\leq 6$ and $2\leq j'\leq j-1$. Thus, $\mC$ is a $(d,O(1),\eps)$-bounded conflict system for $\mH$ for all $\eps\in(0,\frac{1}{4})$, as desired. 


In order to obtain property (\ref{property4}) of Theorem~\ref{thm:coloringproperties}, we define the following test functions and check that they are $(d, \eps, \mC)$-trackable. For each $v \in V(K_n)$, let $S_v \subset E(K_n)$ be the set of $n - 1$ edges in $K_n$ incident to $v$. Let $w_v: E(\mH) \to [0, 2]$ be the weight function that assigns every edge of $\mH$ the size of its intersection with $S_v$. Then, $w_v(\mH) = \sum_{e \in S_v} d_{\mH}(e) = nd - O(n^3)$, proving (W1). Since $w_v$ is a $1$-uniform test function, (W2)-(W4) are trivially satisfied. Therefore, for each $v \in V(K_n)$, $w_v$ is $(d, \eps, \mC)$-trackable.

We could now apply Theorem~\ref{thm:blackbox} to obtain properties (\ref{property1})-(\ref{property4}) of Theorem~\ref{thm:coloringproperties}. Indeed, for suitable $\eps\in(0,\frac{1}{4})$ and sufficiently large $n$, Theorem~\ref{thm:blackbox} yields a $\mC$-free matching $M \subset \mH$ such that for every $v \in V(K_n)$, we have 
\[w_v(M) > (1 - d^{-\eps^3})d^{-1}w_v(\mH) > (1 - n^{-\delta})n\]
for $\delta<\eps^3\log_n(d)$. Thus, for every $v \in V(K_n)$, there are at most $n^{1 - \delta}$ edges in $K_n$ incident to $v$ that do not belong to a triangle selected by $M$. This proves property (\ref{property4}).

In order to guarantee property (\ref{property5}) of Theorem~\ref{thm:coloringproperties}, we  define several additional test functions. These test function will ensure that we do not create bad subgraphs when coloring $L=E(K_n)-F$ by the second, random coloring. 

We will need some additional terminology to describe these new test functions.  For a conflict $C\in \mC$, we will call a subset $C'\subset C$ a \emph{subconflict} of $C$. Given a subgraph $G\subset L$ and a subconflict $C'\subset C$, we say $G$ \emph{completes the conflict $C$ with $C'$} if $G$ completes a bad subgraph in $K_n$ with the subgraph of $F$ corresponding to $C'$. That is, an uncolored subgraph $G$ completes $C$ with $C'$ if there is a way to color the edges of $G$ so that $G$ together with the colored subgraph of $F$ corresponding to $C'$ is a bad subgraph. In particular, we will use this idea when $G$ is a single edge in $K_n$ or a pair of disjoint edges in $K_n$. 

For all distinct $x,y\in V(K_n)$, for $j_x,j_y\in\{1,2\}$, and for any type $t$ of conflict, we define $\mP_{j_x,j_y,t}$ to be the set of all subconflicts $C'$ with the following properties:
\begin{enumerate}
    \item $C'$ contains $\mH$-edges $e_x=(K_x,\alpha_x,\beta_x)$ and $e_y=(K_y,\alpha_y,\beta_y)$ containing $x$ and $y$, respectively, such that either $K_x\cap K_y=\emptyset$ and $|\{\alpha_x,\beta_x\}\cap\{\alpha_y,\beta_y\}|\geq 1$ or $|(K_x\cap K_y)-\{x,y\}|=1$ and $\{\alpha_x,\beta_x\}\cap\{\alpha_y,\beta_y\}=\emptyset$,
    \item there is an edge $x'y'\in E(L)$ disjoint from $xy$ such that $\{xy,x'y'\}$ completes a conflict $C$ of type $t$ with $C'$, and \label{subconflict}
    \item for each $z\in\{x,y\}$, if $\alpha_z$ is the color incident to $z$ in $K_z$ which appears in the bad subgraph corresponding to $C$, then $\alpha_z$ is incident to $z$ exactly $j_z$ times in $K_z$. In the special case $t=c$ where all three edges of $K_x$ appear in the bad subgraph, set $j_x=2$. 
\end{enumerate}

Note that given a conflict of type $t$ with size $j\in\{4,5,6\}$, the subconflicts in $\mP_{j_x,j_y,t}$ will have size $j-2$. Furthermore, some of these sets will be empty, as there may be no subconflicts for a particular choice of $x,y,j_x,$ and $j_y$, so we disregard these cases for the rest of the proof. 

We can show using McDiarmid's Inequality that in the random 9-uniform hypergraph $\mH$ considered earlier, we have for all distinct $x,y\in V(K_n)$, $j_x,j_y\in\{1,2\}$, and $t\in\{a,b,c\}$ that with high probability, \[|\mP_{j_x,j_y,t}|=\frac{\left(p(1-p)^5\right)^2}{j_xj_y}\cdot k^3n^4\pm O(n^{20/3}).\]
Indeed, for $w\in V\backslash\{x,y\}$, define $b_w=n^6$ if $w$ is a copy of $x$ or $y$ and $b_w=n^5$ otherwise; hence $\sum_{w\in V} b_w^2=O(n^{13})$.
We can similarly show for all $x,y\in V(K_n)$, $j_x,j_y\in\{1,2\}$, and $t\in\{d,e,f\}$ that with high probability, if $j$ is the size of a conflict of type $t$, then 
\[|\mP_{j_x,j_y,t}|=\frac{\left(p(1-p)^5\right)^{j-2}}{j_xj_y}\cdot k^jn^{2j-5}\pm O(n^{3j-\frac{16}{3}}).\] 
Since $k=n$, this gives for all $t\in\{a,b,c,d,e,f\}$ that with high probability, 
\[|\mP_{j_x,j_y,t}|=\left(\frac{5^5}{6^6}\right)^{j-2}\frac{n^{3j-5}}{j_xj_y}\pm O(n^{3j-\frac{16}{3}}).\]
We will assume from now on that we have chosen $\mH$ such that this property holds. 

Let $w_{x,y,j_x,j_y,t}$ be the indicator weight function for the subconflicts in $\mP_{j_x,j_y,t}$. Assume for now that these are $(d,\eps,\mC)$-trackable test functions for $\mH$ for all $\eps\in (0,\frac{1}{4})$. 
By including these weight functions when we apply Theorem~\ref{thm:blackbox}, we obtain a matching $M$ such that 
\begin{equation}\label{eq:pairs}
\left|\binom{M}{j-2}\cap \mP_{j_x,j_y,t}\right|=w_{x,y,j_x,j_y,t}(M)\leq  (1+d^{-\eps^3})d^{-j+2}|\mP_{j_x,j_y,t}|\leq(1+n^{-\delta})\frac{n}{3^{j-2}j_xj_y}\end{equation}
for each $x,y,j_x,j_y,t$.

In addition, we define $\mT_{j_x,j_y,t}$ to be the set of all subconflicts $C$ with the same properties as $\mP_{j_x,j_y,t}$, except that property~\ref{subconflict} is replaced by the condition that $\{xy\}$ completes a conflict of type $t$ with $C$. So, we can think of each subconflict $C$ in $\mT_{j_x,j_y,t}$ as extending a subconflict $C'$ in $\mP_{j_x,j_y,t}$ by one $\mH$-edge $(K,\gamma,\gamma')$ where $K$ is edge-disjoint from $K_x$ and $K_y$ and contains the graph edge $x'y'$ with which $xy$ completes $C'$. Thus, given a conflict of type $t$ with size $j\in\{4,5,6\}$, the subconflicts in $\mT_{j_x,j_y,t}$ will have size $j-1$. As before, some of these sets will be empty, as there may be no subconflicts for a particular choice of $x,y,j_x,$ and $j_y$, so we disregard these cases for the rest of the proof. 

Fix some $C'\in \mP_{j_x,j_y,t}$. Since $\mH$ is essentially $d$-regular, we have $d_{\mH}(x'y') = d \pm O(n^2)$, and since $\Delta_2(\mH)=O(d/n)$, almost all edges containing $x'y'$ in $\mH$ form a matching with $e_x$ and $e_y$. Thus, we have
\[|\mT_{j_x,j_y,t}|=j_xj_y(d\pm O(n^2))|\mP_{j_x,j_y,t}|.\]

Let $w'_{x,y,j_x,j_y,t}$ be the indicator weight function for the subconflicts in $\mT_{j_x,j_y,t}$, and again assume for now that these are $(d,\eps,\mC)$-trackable test functions for $\mH$ for all $\eps\in (0,\frac{1}{4})$. Applying Theorem~\ref{thm:blackbox} with all of our weight functions, we obtain a matching $M$ such that 
\begin{equation}\label{eq:triples}
\left|\binom{M}{j-1}\cap \mT_{j_x,j_y,t}\right|=w'_{x,y,j_x,j_y,t}(M)\geq(1-d^{-\eps^3})d^{-(j-1)}|\mT_{j_x,j_y,t}|\geq(1-n^{-\delta})\frac{n}{3^{j-2}}\end{equation}
for each $x,y,j_x,j_y,t$. 
 
By (\ref{eq:pairs}) and (\ref{eq:triples}), the number of edges described in property~(\ref{property5}) of Theorem~\ref{thm:coloringproperties} is at most 
\[\sum_{j_x,j_y\in \{1,2\}} j_xj_y\left|\binom{M}{j-2}\cap \mP_{j_x,j_y,t}\right|-\left|\binom{M}{j-1}\cap \mT_{j_x,j_y,t}\right|\leq n^{1-\delta}.\]
This  proves (\ref{property5}). 

Thus, all that remains is to show that $w_{x,y,j_x,j_y,t}$ and $w'_{x,y,j_x,j_y,t}$ are $(d,\eps,\mC)$-trackable test functions for $\mH$ for all $\eps\in(0,\frac{1}{4})$. 
By our estimations of $|\mP_{j_x,j_y,t}|=\Theta(nd^{j-2})$ and $|\mT_{j_x,j_y,t}|=\Theta(nd^{j-1})$, condition (W1) holds for $w_{x,y,j_x,j_y,t}$ and $w'_{x,y,j_x,j_y,t}$. Also note that condition (W4) is vacuously true for both functions. 

To see condition (W2) for $w_{x,y,j_x,j_y,t}$, fix an edge $e=(K,i,\ell)$ in $\mH$ with $K=xuv$, and suppose that $e$ is in at least one subconflict $C'\in\mP_{j_x,j_y,t}$. We consider cases based on the size $j$ of the conflicts of type $t$. First, let $j=4$.  Then any edge $f$ for which $C'=\{e,f\}$ must contain either one of the $\mH$-vertices $y_i$ or $y_{\ell}$ or one of the graph edges $\{yu,yv\}$. Thus, there are $O(d)<n^7/d^{1+\eps}$ pairs in $\mP_{j_x,j_y,t}$ containing $e$. 

If instead $j=5$, then there are two inequalities to check. Note that any subconflict $C'=\{e,f,f'\}$ in $\mP_{j_x,j_y,t}$ must have an edge $f$ containing an $\mH$-vertex in $\{y_i,y_{\ell},yu,yv\}$, and an edge $f'$ containing either two graph vertices in $e\cup f$ (and hence an $\mH$-vertex of the type $ab$) or a graph vertex and a color in $e\cup f$ (and hence an $\mH$-vertex of the type $a_i$). Thus, there are at most $O(d^2)<n^{10}/d^{1+\eps}$ such subconflicts containing $e$. If we now also fix a second edge $f=(K',s,t)$ in $\mH$ which is in at least one subconflict $C'$ with $e$ in $\mP_{j_x,j_y,t}$, then either $s\in\{i,\ell\}$ (or $t\in \{i,\ell\}$) or $K$ and $K'$ share a vertex. In the first case, the third edge $f'$ in any $C'$ containing $e,f$ must contain one of the graph edges between a vertex in $K$ and one in $K'$, and in the second case, $f'$ must contain a graph vertex from $K$ or $K'$ and a color in $\{i,\ell,s,t\}$.  So, there are $O(d)<n^{10}/d^{2+\eps}=n^{4-\eps}$ such subconflicts containing $e,f$. 
 
The cases for $j=6$ are similar. There are $O(d^3)<n^{13}/d^{1+\eps}$ subconflicts $C'=\{e,e',f,f'\}\in \mP_{j_x,j_y,t}$ which contain $e$ since there are $k^2n^2$ ways to pick an edge $e'$ which shares a graph vertex with $e$, then $O(d)$ ways to pick a second edge $f$ which shares a graph vertex with $e\cup e'$ with a fixed color, and finally $O(d/n)$ ways to pick the third edge $f'$ since we know both a graph edge and color it must contain. In addition, there are $O(d^2)<n^{13}/d^{2+\eps}$ ways to pick a subconflict containing a fixed pair $e,e'$ and $O(d)<n^{13}/d^{3+\eps}$ ways to pick a subconflict containing a fixed triple $e,e',f$. Thus, condition (W2) is satisfied for $w_{x,y,j_x,j_y,t}$. 

To see property (W2) for $w'_{x,y,j_x,j_y,t}$, recall that each subconflict $C$ in $\mT_{j_x,j_y,t}$ is formed by adding an edge $f$ in $\mH$ to a subconflict $C'$ in $\mP_{j_x,j_y,t}$, where $f$ must contain the graph edge $x'y'$ with which $xy$ completes $C'$. So, there are at most $O(d)$ subconflicts $C$ which extend a particular $C'$. Now suppose $t$ has size $j$, fix an edge $e=(K,i,\ell)$ in $\mH$, and suppose that $e$ is in at least one subconflict $C\in\mT_{j_x,j_y,t}$. Since $w_{x,y,j_x,j_y,t}$ satisfies property (W2), the number of subconflicts $C$ containing $e$ is at most $O(d)\cdot n^{3j-5}/d^{1+\eps}\leq w'_{x,y,j_x,j_y,t}/d^{1+\eps}$, as desired. The cases where we fix two or three edges in $\mH$ follow similarly. 

Now we will show that $w_{x,y,j_x,j_y,t}$ satisfies property (W3). To this end, fix two $\mH$-edges $e=(K,i,\ell)$ and $f=(K',s,t)$ which are in at least one subconflict in $\mP_{j_x,j_y,t}$ together. We will show for each $j\in\{4,5,6\}$ that $|(\mC_e)^{j-1}\cap (\mC_f)^{j-1}|\leq d^{j-1-\eps}$. 
Note that the number of conflicts of size $j\in\{4,5,6\}$ containing $e$ (and not necessarily $f$) is at most $\Delta(C^{(j)})=O(d^{j-1})$. 
However, any subconflict of size $j-1$ which also  completes a conflict with $f$ must use either two additional fixed vertices (if $K$ and $K'$ are disjoint) or one other fixed vertex and one more fixed color (if $K$ and $K'$ intersect in a vertex), and thus there are at most $O(d^{j-1}/n^2)<O(d^{j-1-\eps})$ such subconflicts.
By the same reasoning, $w'_{x,y,j_x,j_y,t}$ satisfies property (W3). 
\qed

\subsection{Proof of Theorem~\ref{thm:upperbound}}

Applying Theorem~\ref{thm:coloringproperties} with $2\delta$ in place of $\delta$, we obtain a coloring of a subgraph $F\subset K_n$ with the five desired properties. In particular, the remaining uncolored subgraph $L=K_n-E(F)$ has maximum degree $\Delta(L)\leq n^{1-\delta}$ by property (\ref{property4}). We now randomly color the edges of $L$ from a set $P$ of $k=n^{1-\delta}$ new colors. For each edge in $L$, we assign its color with equal probability $1/k$, independently of the other edges. 

We will show using the Local Lemma that the union of these two colorings of $F$ and $L$ is a $(5,8)$-coloring of $K_n$. In order to do so, we define several types of bad events. First, for any pair of adjacent edges $e,f$ in $L$, we define $A_{e,f}$ to be the event that both $e$ and $f$ receive the same color. Then $\prob(A_{e,f})=k^{-1}$. 

The other bad events will correspond to appearances of bad subgraphs. By our construction of the coloring of $F$, none of these subgraphs can appear in $K_n$ using only edges colored in $F$. Furthermore, since we use disjoint sets of colors on $F$ and $L$, and each color in a bad subgraph appears twice (except for type $b$ bad subgraphs, where one color appears three times), it suffices to define at most three types of bad events for each type of bad subgraph (with one, two, or three monochromatic matchings coming from $L$). Some of the bad subgraphs do not require all three types of bad events; for example, in  bad subgraphs of type $c$, the 2-colored triangle must be in $F$, so a single type of bad event suffices in this case. 

We will say that a subgraph $H$ of $K_n$ is \emph{potentially bad} if there is a way to color its edges in $L$ using colors from $P$ that would create a bad subgraph. That is, $G$ is potentially bad if $H\cap L$ completes $H$ into a bad subgraph. For example, any copy of $C_4$ in $L$ is potentially bad, as is any copy of $C_4$ in which one pair of matching edges is in $F$ and the other is in $L$. 

For each potentially bad subgraph $H$ in $K_n$ and corresponding type $t$ of bad subgraph, let $B_{H,t}$ be the event that the edges of $H\cap L$ receive colors from $P$ which make $H$ into a bad subgraph of type $t$. Note that if $m=|H\cap L|$, then $m\in\{2,4,6\}$ since $H \cap L$ can consists of one, two, or three 2-edge matchings.  
Then $\prob(B_{H,t})\leq 2k^{-m/2}$.

Let $\mE$ be the set of all bad events defined above. Two events are edge-disjoint if their corresponding edges in $L$ are distinct. Let $E\in\mE$. There are at most 6 ways to pick a graph edge $xy$ in $E$, and for each type $t$ of bad event, we will bound the number of these which share the edge $xy$ with $E$. There are at most $\Delta(L)=kn^{-\delta}$ events $A_{e,f}$ which contain $xy$. Now we will consider events $B_{H,t}$. If $m=2$, then by property (\ref{property5}) of the matching used to color $F$, we know there are at most $n^{1-2\delta}=kn^{-\delta}$ events $B_{H,t}$ which contain $xy$. 

If $m=4$, then $t\in\{a,e,f\}$.  First, suppose $t = a$.  There are $O(\Delta(L))^2=O(k^2n^{-2\delta})$ events $B_{H,a}$ which contain $xy$, since there are $\Delta(L)$ ways each to pick a neighbor of $x$ and a neighbor of $y$ in $L$ to complete the 4-cycle. Now suppose $t \in \{e,f\}$. For any bad subgraph $H$ of type $t$, $H \cap L$ must be a path on 5 vertices with both endpoints in $H\cap F$. 
There are $O(\Delta(L)^2)=O(k^2n^{-2\delta})$ ways to pick a 4-vertex path containing $xy$ in $L$. Note that the fifth vertex of $H$ is determined by this choice of path. Indeed,  some edge $f$ of $H\cap F$ must be induced by the four vertices on the path, and this edge comes from an $\mH$-edge which either contributes a monochromatic 2-edge path to $H$ or contributes one edge of a monochromatic 2-edge matching to $H$. In either case, the edge $f'$  in $H\cap F$ which receives the same color as $f$  determines the fifth vertex of $H$, and hence, fixes the bad rest of the bad subgraph. Thus, there are $O(\Delta(L)^2)=O(k^2n^{-2\delta})$ events $B_{H,t}$ with $m=4$ containing $xy$.  

Finally, if $m=6$, then it must be the case that $t=f$. There are at most $O(\Delta(L)^3)$ ways to create a 5-vertex path in $L$ containing $xy$, and hence, to create a potentially bad subgraph of type $f$. Thus, there are at most $O(\Delta(L)^3) = O(k^3 n^{-3\delta})$ bad events $B_{H,f}$ containing $xy$.

To apply the Local Lemma, we now assign a number $x_E\in [0,1)$ to each bad event $E\in \mE$. For each bad event of type $A_{e,f}$, let $x_A=10/k$. For each bad event of type $B_{H,t}$ with $m=|H\cap L|$, let $x_{B,m}=10/k^{m/2}$. Note that the probability of any event $A_{e,f}$ is $k^{-1}$, which is smaller than 
\[x_A(1-x_A)^{O(kn^{-2\delta})}(1-x_{B,2})^{O(kn^{-2\delta})}(1-x_{B,4})^{O(k^2n^{-2\delta})}(1-x_{B,6})^{O(k^3n^{-2\delta})}=(1+o(1))x_A.\]
In addition, for each $m\in\{2,4,6\}$, the  probability of any event $B_{H,t}$ with $m=|H\cap L|$ is at most $2k^{-m/2}$, which is smaller than 
\[x_{B,m}(1-x_A)^{O(kn^{-2\delta})}(1-x_{B,2})^{O(kn^{-2\delta})}(1-x_{B,4})^{O(k^2n^{-2\delta})}(1-x_{B,6})^{O(k^3n^{-2\delta})}=(1+o(1))x_{B,m}.\]
Thus, condition~(\ref{LLL}) holds, and Lemma~\ref{lem:LLL} implies that with positive probability, our colorings of $F$ and $L$ give a $(5,8)$-coloring of $K_n$, as desired. 
\qed

\section*{Acknowledgements}
We thank the anonymous referees for their many useful comments and suggestions. Work on this project started during the Research Training Group (RTG) rotation at Iowa State University in the spring of 2023. Emily Heath, Alex Parker, and Coy Schwieder  were supported by NSF grant DMS-1839918. 
Shira Zerbib was supported by NSF grant DMS-1953929.
We would like to thank Chris Wells, Bernard Lidick\'y and Ryan Martin for fruitful discussions during early stages of this project. 

\bibliography{bibfile}
\bibliographystyle{abbrv}

\end{document}